\newcommand{\CC}{\mathcal{C}}
\newcommand{\YD}{\mathcal{YD}}
\newcommand\id{\operatorname{I}}
\newcommand{\End}{{\rm End}}
\newcommand{\Rep}{{\rm Rep}}
\newcommand{\ot}{\otimes}
\newcommand{\B}{\mathcal{B}}
\newcommand{\e}{\mathbf{e}}
\newcommand{\lan}{\langle}
\newcommand{\ra}{\rangle}
\newcommand{\Z}{\mathbb{Z}}
\newcommand{\Zz}{\mathcal{Z}}
\newcommand{\N}{\mathbb{N}}
\newcommand{\Q}{\mathbb{Q}}
\newcommand{\C}{\mathbb{C}}
\newcommand{\supp}{\mathrm{supp}}
\newcommand{\Aut}{\mathrm{Aut}}
\newcommand{\Inf}{\mathrm{Inf}}
\newtheorem{theorem}{Theorem}[section]
\newtheorem{lemma}[theorem]{Lemma}
\newtheorem{prop}[theorem]{Proposition}
\theoremstyle{definition}
\newtheorem{definition}[theorem]{Definition}
\newtheorem{example}[theorem]{Example}
\newtheorem{defn}[theorem]{Definition}
\newtheorem{conj}[theorem]{Conjecture}
\newtheorem{cor}[theorem]{Corollary}
\theoremstyle{remark}
\newtheorem{remark}[theorem]{Remark}
\numberwithin{equation}{section}
\newcounter{commentcounter}
\begin{document}
\title{Braid Representations from Unitary Braided Vector Spaces}
\author{C\'esar Galindo}
\address{Departamento de Matem\'aticas\\ Universidad de los Andes\\ Carrera 1 N.
18A - 10,  Bogot\'a, Colombia}
\email{cn.galindo1116@uniandes.edu.co}
\author{Eric C. Rowell}
\address{Department of Mathematics\\
    Texas A\&M University \\
    College Station, TX 77843-3368
}
\email{rowell@math.tamu.edu}
\thanks{ C.G. would like to thank the hospitality of the Mathematics Departments at MIT and Texas A\&M where part of this work was carried out. C.G. was partially supported by Vicerrector\'{i}a de Investigaciones de
la Universidad de los Andes. E.C.R. gratefully acknowledges the partial support of US NSF grant DMS-1108725 and the hospitality and support of BICMR, Peking University where part of this article was prepared. }

\begin{abstract}
 We investigate braid group representations associated with unitary braided vector spaces, focusing on a conjecture that such representations should have virtually abelian images in general and finite image provided the braiding has finite order.  We verify this conjecture for the two infinite families of Gaussian and group-type braided vector spaces, as well as the generalization to quasi-braided vector spaces of group-type.
\end{abstract}
\maketitle
\section{Introduction}

 In this article we study certain unitary representations of braid group $\B_n$ generated by
$\sigma_1,\ldots,\sigma_{n-1}$ satisfying:
\begin{enumerate}
 \item[(R1)] $\sigma_i\sigma_{i+1}\sigma_i=\sigma_{i+1}\sigma_i\sigma_{i+1}$
\item[(R2)] $\sigma_i\sigma_j=\sigma_j\sigma_i$ for $|i-j|>1$.
\end{enumerate}

Sequences of unitary representations of $\B_n$ are of central importance in the topological model for
quantum computation, in which the computational power is intertwined
with the sizes of the closed images of the unitary braid group action on the state spaces of $n$-punctured disks.  In this context the images of the braid group generators $\sigma_i$ serve as quantum gates, and the goal is to determine when braiding alone is universal (\emph{i.e.}, any unitary operator can be efficiently approximated as the image of a braid, within a given error threshold).  For universal braiding it is essentially sufficient to show that the closed images of the braid group $\B_n$ on each irreducible sector $V$ contains $SU(V)$, for $n$ large enough.

The unitary braid representations in this setting are described algebraically through unitary braided fusion categories.  An object $X$ in such a category $\CC$ gives rise, through the braiding operators $c_{X,X}\in\End_\CC(X^{\ot 2})$, to unitary braid group representations $\rho_X^n$ on $\End_\CC(X^{\ot n})$. Geometrically, this corresponds to disks with $n$-punctures, each labeled by the object $X$ (with boundary label varying over all simple objects).

Recent articles devoted to determining
images of unitary braid group representations include: \cite{FLW,LRW,LR,RUMA} while Jones \cite{Jones86} seems to have been the first to consider the problem of characterizing the images.  In \cite{FLW,LRW} the cases where the braid group images are imprimitive, decomposable or finite are eliminated and then
the remaining possibilities are analyzed in terms of the eigenvalues of the generators $\sigma_i$, using Lie theory.

Braid group representations emerge in quantum information in another context through the Yang-Baxter equation (\cite{KL}), as entanglement resources.  Recall that $c\in\Aut(V^{\ot 2})$ satisfies the Yang-Baxter equation if
$$(c\ot I_V)(I_V\ot c)(c\ot I_V)=(I_V\ot c)(c\ot I_V)(I_V\ot c).$$
In this case the pair $(V,c)$ will be called a \textbf{braided vectors space} (BVS).
Given a BVS, $(V,c)$, we obtain braid group representations
$\rho^c:\B_n\rightarrow GL(V^{\otimes n})$
defined on generators by $\rho^c(\sigma_i)=I^{\ot (i-1)}\ot c\ot I^{\ot
(n-i-1)}$.

A connection between braid group representations in these two quantum computational contexts was established and generalized in \cite{RW,GHR} using the concept of \emph{localization}.  Roughly, a localization of the sequence of representations $\rho_X^n$ is a BVS $(V,c)$ that faithfully encodes $\rho_X^n$, uniformly for all $n$.  It is conjectured in \emph{loc. cit.} that the sequence of representations $\rho_X^n$ are localizable if, and only if, $\dim(X)^2\in\N$.  On the other hand, the property $F$ conjecture (see \cite{NR}) predicts that the braid group representations $\rho_X^n$ have finite image if, and only if, $\dim(X)^2\in\N$.

Motivated by the above we consider the problem of characterizing the images of the braid group representation associated with a unitary BVS.  Recall that a \textbf{virtually abelian} group is a group with an abelian subgroup of finite index. We make the following:
\begin{conj}\label{mainconj}
 Suppose $(V,c)$ is a unitary BVS.
 \begin{enumerate}
  \item[(a)] Then $\rho^c(\B_n)$ is a virtually abelian group for all
$n$.
\item[(b)]  If in addition $c$ has finite order then $\rho^c(\B_n)$ is finite.
 \end{enumerate}
\end{conj}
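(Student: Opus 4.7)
The plan is to reduce Conjecture \ref{mainconj} to the single stronger assertion that the image $\rho^c(P_n)$ of the pure braid group $P_n \trianglelefteq \B_n$ is abelian. Since $[\B_n:P_n]=n!$ is finite, this would immediately yield (a). For (b), assume $c$ has finite order $m$, so that $\rho^c(\sigma_i)^m=I$ for each $i$. Every standard pure braid generator $A_{ij}=(\sigma_{j-1}\cdots\sigma_{i+1})\sigma_i^2(\sigma_{i+1}^{-1}\cdots\sigma_{j-1}^{-1})$ is $\B_n$-conjugate to $\sigma_1^2$, so its image has order dividing $m$. A finitely generated abelian group whose generators all have finite order is itself finite, so an abelian $\rho^c(P_n)$ would be finite, and since $[\B_n:P_n]<\infty$, so would $\rho^c(\B_n)$.

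The remaining task is to show $\rho^c(P_n)$ is abelian. My first step is to exploit unitarity by writing $c=\sum_j \lambda_j P_j$ as a sum over its unit-modulus eigenvalues with orthogonal projections $P_j$, which constrains the local structure of each $\rho^c(\sigma_i)$. I would then attack the two infinite families singled out in the abstract. For Gaussian BVSs the braiding is built from a non-degenerate quadratic form on a finite abelian group, and one expects each $\rho^c(A_{ij})$ to act essentially through a character of a Heisenberg-type group, so that pairwise commutativity reduces to the vanishing of a symplectic commutator determined by the quadratic form. For group-type BVSs the braiding is encoded by a finite group $G$ together with a bicharacter (or a $2$-cocycle), so the commutator $[\rho^c(A_{ij}),\rho^c(A_{kl})]$ can be computed directly from that combinatorial data and is forced to be trivial by the group-theoretic relations on $G$.

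The principal obstacle is the absence of a structure theorem for arbitrary unitary BVSs. Neither unitarity alone nor the Yang-Baxter equation by itself appears strong enough to force the pure braid image to be abelian, and it is not known that every unitary BVS must be of Gaussian or group type. I therefore would not attempt a single uniform argument; instead I would verify the conjecture family by family, extending the group-type argument to the larger class of quasi-braided vector spaces of group-type as the abstract suggests, and leave the general case of (a) open pending a better classification of unitary braided vector spaces.
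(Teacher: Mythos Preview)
First, note that the statement is a \emph{conjecture}: the paper does not prove it in general, but only verifies it for the Gaussian and (quasi-)group-type families. So there is no full proof to compare against; the relevant comparison is with the paper's arguments for those families.

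Your proposed reduction, however, has a genuine gap: the claim that $\rho^c(P_n)$ is abelian is not merely unproved, it is \emph{false} already in the Gaussian family. Take $m=2$, so $q=e^{\pi i/2}=i$ and $R_i=\frac{1}{\sqrt{2}}(1+iu_i)$. Then
\[
\rho^c(\sigma_i^2)=R_i^2=\tfrac{1}{2}(1+iu_i)^2=iu_i,
\]
and since $u_iu_{i+1}=q^2u_{i+1}u_i=-u_{i+1}u_i$ in $ES(2,n-1)$, the images of the adjacent pure-braid generators $A_{i,i+1}=\sigma_i^2$ and $A_{i+1,i+2}=\sigma_{i+1}^2$ anticommute rather than commute. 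Hence $\rho^c(P_n)$ is non-abelian for all $n\geq 3$ in this case, even though $\rho^c(\B_n)$ is finite (and so, trivially, virtually abelian). The same phenomenon persists for larger $m$ and for the odd-prime Gaussian representations, whose images are essentially finite symplectic groups; the pure-braid image there is far from abelian. So ``virtually abelian'' in part (a) cannot be witnessed by $P_n$ in general, and your reduction of (b) to (a) via $P_n$ collapses as well.

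The paper's actual verifications proceed quite differently. For the Gaussian BVSs (Proposition~\ref{gaussian finite}) one shows directly that conjugation by the $R_i$ preserves the finite set $\{q^{a_0}u_1^{a_1}\cdots u_{n-1}^{a_{n-1}}\}$, giving a homomorphism to a finite symmetric group with central kernel; finiteness then follows from Lemma~\ref{modZ}. For (quasi-)group-type BVSs (Corollary~\ref{qBVS v.a.}) one embeds the braid representation into a \emph{monomial} one via the twisted Yetter--Drinfeld description; the abelian finite-index subgroup is then the diagonal torus of the monomial group, not the image of $P_n$. If you want to salvage part of your outline, the useful observation is your derivation of (b) from (a) \emph{together with} finite order of $c$: the paper's Lemma~\ref{modZ} plays an analogous role, but via the center of the image rather than via $P_n$.
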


The validity of this conjecture would imply, for example, that if $(V,c)$ is a unitary BVS, $\{c\}$ is \emph{never} a universal gate set.

Unitary braid representations are abundant: for example Wenzl \cite{Wenzl} showed that the braided fusion categories associated with quantum groups at roots of unity are usually unitary, and hence produce many examples.
However, despite their ubiquity throughout mathematics and physics, there are remarkably few unitary BVSs in the literature.  A classification of BVSs is only known for $\dim(V)=2$ \cite{Dye,Hiet1}, for which Conjecture \ref{mainconj} follows from \cite{FRW,jenn}.  In this article we
focus on two infinite families: the so-called \emph{Gaussian} BVSs (see Section \ref{gaussian} and \cite{GJ}) and \emph{group-type} BVSs (see Section \ref{group-type}).  In particular, we verify Conjecture \ref{mainconj}(a) for group-type BVSs (see Corollary \ref{qBVS v.a.} and Conjecture \ref{mainconj}(b) for Gaussian BVSs (see Proposition \ref{gaussian finite}).
We give a generalization of BVSs of group-type to quasi-BVSs of group-type and prove our results in this more general setting.

The Gaussian braid representations here are (generalizations of) those considered in \cite{HNWphys}.  In \emph{loc. cit.} these appear as the braiding for anyonic models generalizing Ising anyons and Majorana
 zero modes.  The associated TQFT is the $SO(N)_2$ Chern-Simons theory, a fact that is established in \cite{RWe}.
Similarly, the braid representations associated with group-type BVSs considered here are generalizations of those obtained from Dijkgraaf-Witten TQFTs (\cite{DW}).

\section{Preliminary Results}\label{Preliminary Results}

Conjecture \ref{mainconj} should be compared with \cite[Conjecture 2.7]{RW}, where the predicted conclusion is that $\rho^c(\B_n)$ is finite modulo its center, assuming that $c$ has finite order.  Our minor strengthening is due to the following:
\begin{lemma}\label{modZ}
If $c$ has finite order and $\rho^c(\B_n)$ is finite modulo its center $Z^c$ then $\rho^c(\B_n)$ is itself
finite.
\end{lemma}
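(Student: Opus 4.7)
The plan is to combine Schur's theorem on the commutator subgroup with the observation that the abelianization of $\B_n$ is cyclic. Write $G := \rho^c(\B_n)$. By hypothesis $Z^c = Z(G)$ has finite index in $G$, and by Schur's classical theorem this forces $[G,G]$ to be finite. If I can also show that $G^{ab} = G/[G,G]$ is finite, then $G$ itself is finite from the short exact sequence $1 \to [G,G] \to G \to G^{ab} \to 1$.

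First I would check that each generator $\rho^c(\sigma_i) = I^{\ot(i-1)} \ot c \ot I^{\ot(n-i-1)}$ has the same order as $c$, hence finite order. This step is immediate from the tensor-product form of the image. Next, I would invoke the well-known fact that the abelianization of $\B_n$ is the infinite cyclic group generated by the common image of all $\sigma_i$ (the exponent-sum homomorphism). Consequently, in $G^{ab}$ all $\rho^c(\sigma_i)$ collapse to a single element whose order divides that of $c$, so $G^{ab}$ is cyclic of finite order.

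With both $[G,G]$ finite (by Schur) and $G^{ab}$ finite (by the abelianization computation), finiteness of $G$ follows from $|G| = |[G,G]|\cdot |G^{ab}|$.

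I do not expect any real obstacle here; the argument is essentially two standard facts glued together. The only point that deserves care is confirming the hypothesis of Schur's theorem is literally $[G:Z(G)] < \infty$ (which it is, since $Z^c$ is by definition the center of $G$), and making sure the appeal to the abelianization of $\B_n$ is justified without further hypotheses on $c$.
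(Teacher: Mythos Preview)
Your proof is correct and takes a genuinely different route from the paper's. The paper argues directly that the center $Z^c$ is finite: decomposing $\rho^c$ into irreducibles $\rho_i$, each central element becomes a block-scalar $\sum_i \lambda_i I_i$, and since every generator has finite order the determinant of each block is a root of unity, forcing each $\lambda_i$ to be a root of unity; hence every central element has finite order, and $Z^c$, being a finite-index (hence finitely generated) abelian subgroup, is finite. Your argument sidesteps the center entirely by invoking Schur's theorem to bound $[G,G]$ and the abelianization $\B_n^{ab}\cong\Z$ to bound $G^{ab}$. The paper's approach is more self-contained in that it avoids citing Schur's theorem, staying within elementary linear algebra; yours is cleaner in that it reduces the lemma to two off-the-shelf group-theoretic facts and never needs to decompose the representation or invoke Schur's lemma for irreducibles. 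Both hinge on the same basic input---that the generators $\rho^c(\sigma_i)$ have finite order---but exploit it differently.
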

\begin{proof}
We decompose $\rho^c$
into irreducible representations $\rho_i$ and denote by $Z_i= \C I_i$ the
center of $\rho_i(\B_n)$.  As $c$ has finite order, the images of the
generators $\sigma_j$ in $\rho_i(\B_n)$ also have finite order.  Therefore,
every element of $\rho_i(\B_n)$ has determinant a root of unity (of finite
order).  Now suppose $\beta\in Z^c$ with decomposition $\beta=\sum_i
\lambda_iI_i$ as an element of $Z^c=\sum_i Z_i$.  Now we see that the
determinant of $\lambda_iI_i$ is a root of unity, and hence so is $\lambda_i$.
Therefore $\beta$ has finite order.  Thus the finitely generated abelian
group $Z^c$ is finite, as each element has finite order.  Thus $Z^c$ is finite and the result follows.
\end{proof}

 One large class of examples that obviously satisfy Conjecture \ref{mainconj}(a) are the so-called \emph{locally monomial BVSs} obtained as follows: Let $(S,X)$ be a set-theoretical solution (see \cite{ESS}) to the braid equation, \emph{i.e.}, $S:X\times X\rightarrow X\times X$ satisfies the Yang-Baxter equation in
$\Aut(X^{\times 3})$.  Then $S$ gives rise to a BVS $(V,S)$ via linearization.  Now choose a unitary diagonal matrix $D$ such that $(V,DS)$ is a BVS.  Observe that with respect to the basis $X$, $DS$ is a monomial matrix so that $\rho^{DS}(\B_n)$ is isomorphic to a subgroup of the group $M_{|X|^n}(\mathbb{T})$ of $|X|^n\times |X|^n$ monomial matrices with entries on the unit circle $\mathbb{T}$.  Now $M_{|X|^n}(\mathbb{T})$ is isomorphic to $\mathfrak{S}_{|X|^n}\rtimes \mathbb{T}^{|X|^n}$ (a wreath product) which has the abelian group $\mathbb{T}^{|X|^n}$ as a finite index subgroup.

This example motivates the following:

\begin{defn}

\begin{itemize}
\item A \textbf{locally monomial BVS} consists of the data $(V,c,X, $ $(V_x)_{x\in X}),$ where $(V,c)$ is a BVS, $X$ is a finite set, and $(V_x)_{x\in X})$ is a family of one dimensional subspaces of $V$ indexed by $X$, such that $V=\bigoplus_{x\in X}V_x$ and $c$ permutes the spaces $V_x\otimes V_{y}$ for all $x, y\in X$.

\item A BVS $(V,c)$ is called \textbf{locally monomializable} if there exist a finite set $X$ and a decomposition $(V_x)_{x\in X}$, such that $(V,c,X, (V_x)_{x\in X})$ is a locally monomial BVS.

\item Two locally monomial BVSs $(V,c_V,X, (V_x)_{x\in X})$ and $(W,c_W,Y, (W_y)_{y\in Y})$ are \textbf{monomial equivalent} if there is an isomorphism $f:V\to W$ of BVSs such that $f(V_x)= W_y$ for all $x\in X$.

\end{itemize}
\end{defn}

\begin{remark}
\begin{itemize}
\item[1.] The braiding of a locally monomial BVS induces a set-theo\-retical solution of the Yang-Baxter equation over $X$.

\item[2.] A BVS can have more than one inequivalent locally monomial structure.

\item[3.] If $(X,S)$ is a set-theoretical solution of the Yang-Baxter equation all possible locally monomial structures (up to diagonal isomorphism) are in one-to-one correspondence with the second Yang-Baxter cohomology group $H^2_{YB}(X,\mathbb C^*)$, see \cite{Carter}. 

\item[4.] Let  $(V,c_V,X, (V_x)_{x\in X})$ be a locally monomial BVS with $(X,S)$ the associated set-theoretical solution of the Yang-Baxter equation. If  there is a graded basis $\{v_x\in V_x\}_{x\in X}$, such that the scalars of $c(v_x\otimes v_y)= q_{x,y}v_{s_1(x,y)}\otimes v_{s_2(x,y)}$ are roots of the unity, then the image of the representation of $\B_n$ is a finite group.

\end{itemize}
\end{remark}

\section{Gaussian Braided Vector Spaces}\label{gaussian}
In this section we construct unitary, finite order BVSs of dimension $m$ generalizing the ``Gaussian'' representations
found in \cite{GJ,jonespjm,Jcmp89}, in which the $m$ odd cases are found.  We give full details as the case $m$ even is new, see also Remark \ref{typos}.

Let $m\in\N$ and define $q=\begin{cases}e^{2\pi i/m}, & m \;\mathrm{odd} \\ e^{\pi i/m}, &
m \;\mathrm{even.}\end{cases}$  Define, as in \cite{Jcmp89},
$ES(m,n-1)$ to be the algebra generated by
$u_1,\ldots,u_{n-1}$ subject to: $u_i^m=1$, $u_iu_{i+1}=q^2u_{i+1}u_i$ and
$u_iu_j=u_ju_i$ for $|i-j|>1.$
\begin{prop}\label{Gaussprop}

The mapping $\varphi_n(\sigma_i)=\frac{1}{\sqrt{m}}\sum_{j=0}^{m-1}q^{j^2}u_i^j$
defines a
representation of $\B_n$ into $ES(m,n-1)$ which becomes a $\ast$-representation
upon setting $u_i^*=u_i^{-1}$.
\end{prop}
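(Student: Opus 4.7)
The proof has three ingredients. First, relation (R2) is immediate: for $|i-j|>1$ the generators $u_i$ and $u_j$ commute in $ES(m,n-1)$, and therefore so do the polynomial expressions $\varphi_n(\sigma_i)$ and $\varphi_n(\sigma_j)$.

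For (R1), the computation takes place in the subalgebra generated by $a := u_i$ and $b := u_{i+1}$, with relations $a^m = b^m = 1$ and $ab = q^2 ba$ (equivalently, $a^j b^k = q^{2jk} b^k a^j$). Writing $G(x) := \tfrac{1}{\sqrt{m}}\sum_{j=0}^{m-1} q^{j^2} x^j$, I would first verify that $q^{j^2}$ is well defined as a function of $j \in \Z/m\Z$; this reduces to $q^{2m}=1$ together with $q^{m^2}=1$, which holds in the odd case because $q^m=1$ and in the even case because $q^{m^2} = (-1)^m = 1$. Next, expand each triple product and use the Heisenberg identity to collect all $a$-powers to the left and all $b$-powers to the right. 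After a change of summation variable $s=j+l$ in each expansion, both $G(a)G(b)G(a)$ and $G(b)G(a)G(b)$ reduce to the common expression
\[
\frac{1}{m^{3/2}}\sum_{s,k\in\Z/m\Z}q^{s^2+k^2}\,a^s b^k\sum_{l\in\Z/m\Z} q^{2l^2-2l(s+k)},
\]
so the braid relation follows after relabeling the inner dummy variable. Notably, no explicit evaluation of the inner Gauss-type sum is required; one only needs invariance of $q^{2l^2}$ under shifts of $l$ modulo $m$, which is part of the well-definedness check above.

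Finally, the $\ast$-property reduces to showing that each $\varphi_n(\sigma_i)$ is unitary. Expanding $\varphi_n(\sigma_i)\varphi_n(\sigma_i)^*$ and substituting $\ell = j-k$ gives $\tfrac{1}{m}\sum_\ell q^{\ell^2} u_i^\ell \sum_k q^{2\ell k}$; the inner sum equals $m\delta_{\ell,0}$ by character orthogonality on $\Z/m\Z$, since $q^2$ is a primitive $m$-th root of unity in both parities. The main obstacle is not conceptual but organizational: the two parity cases must be treated uniformly, and this is accomplished once one establishes that $q^{j^2}$ descends to $\Z/m\Z$ and that $q^2$ has order exactly $m$; the rest is formal algebra with the Heisenberg commutation relation and a single change of summation variable.
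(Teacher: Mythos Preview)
Your proof is correct, and it takes a genuinely different route from the paper's. The paper invokes a general lemma of Jones giving three conditions (a),(b),(c) on a function $f:G\to\C^\times$ under which $\sigma_i\mapsto\sum_g f(g)u_i^g$ extends to a braid group homomorphism, then verifies these for $f(a)=Kq^{a^2}$ by explicitly evaluating the Gauss sums $\sum_g q^{g^2}$ (separately for $m$ odd and $m$ even) to pin down the normalization constant $K$; the $\ast$-property is obtained afterward by rescaling. Your argument bypasses both the lemma and the Gauss-sum evaluations: once you observe that $j\mapsto q^{j^2}$ descends to $\Z/m\Z$ in both parities, a direct expansion using $a^jb^k=q^{2jk}b^ka^j$ and the single substitution $s=j+l$ collapses both triple products to the \emph{same} expression, so equality is automatic without ever knowing what the inner sum $\sum_l q^{2l^2-2l(s+k)}$ equals. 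The paper's approach has the advantage of situating the construction within Jones's general framework and making the Gauss-sum structure explicit (which is used elsewhere, e.g.\ in the metaplectic literature), while yours is more self-contained and handles the two parities uniformly with less overhead. One minor point worth stating explicitly: invertibility of $\varphi_n(\sigma_i)$, needed for a genuine group homomorphism, follows from your unitarity computation together with finite-dimensionality of $ES(m,n-1)$.
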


We will need the following result, found in \cite[Example
2.18 and Section 3.3]{jonespjm}:
\begin{lemma}
 Let $G$ be a cyclic quotient of $\Z$ (i.e., written additively) and
$f:G\rightarrow\C^\times$ a function satisfying:

\begin{enumerate}
\item[(a)] $f(g)=f(-g)$
\item[(b)] $\frac{1}{|G|}\sum_{h\in G}\frac{f(h)}{f(g-h)}=\delta_{g,0}$
\item[(c)] $\sqrt{|G|}\frac{f(x+y)}{f(x)f(y)}=\sum_{g\in
G}\frac{f(g-y)f(g+x)}{f(g)}.$
\end{enumerate}

Then $\varphi^f:\sigma_i\rightarrow \sum_{g\in G}f(g)u_i^g$ gives a
homomorphism $\B_n\rightarrow ES(m,n-1)$.

\end{lemma}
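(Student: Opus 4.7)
The plan is to verify that the images $\varphi^f(\sigma_i)=\sum_{g\in G}f(g)u_i^g$ satisfy the braid relations. Relation (R2), $\varphi^f(\sigma_i)\varphi^f(\sigma_j)=\varphi^f(\sigma_j)\varphi^f(\sigma_i)$ for $|i-j|>1$, is immediate from $u_iu_j=u_ju_i$ in $ES(m,n-1)$; invertibility of the image of each generator is a short direct check using (a) and (b), which together show that $\frac{1}{|G|}\sum_g f(g)^{-1}u_i^g$ is a two-sided inverse of $\varphi^f(\sigma_i)$. The substantive content of the lemma is therefore (R1).

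To check (R1), I would expand both sides of $\varphi^f(\sigma_i)\varphi^f(\sigma_{i+1})\varphi^f(\sigma_i)=\varphi^f(\sigma_{i+1})\varphi^f(\sigma_i)\varphi^f(\sigma_{i+1})$ as triple sums. An easy induction from $u_iu_{i+1}=q^2u_{i+1}u_i$ gives $u_{i+1}^bu_i^c=q^{-2bc}u_i^cu_{i+1}^b$, whence
\begin{align*}
u_i^au_{i+1}^bu_i^c &= q^{-2bc}u_i^{a+c}u_{i+1}^b,\\
u_{i+1}^au_i^bu_{i+1}^c &= q^{-2ab}u_i^bu_{i+1}^{a+c}.
\end{align*}
Since $\{u_i^su_{i+1}^t\}_{s,t\in G}$ is a basis for the subalgebra of $ES(m,n-1)$ generated by $u_i$ and $u_{i+1}$, relation (R1) is equivalent to the scalar identity
\[
f(t)\sum_{c\in G}f(s-c)f(c)\,q^{-2tc} \;=\; f(s)\sum_{c\in G}f(c)f(t-c)\,q^{-2sc}
\]
holding for every $s,t\in G$.

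To prove this coefficient identity, I would first use (a) to rewrite $f(s-c)=f(c-s)$ and then shift the summation variable $c\mapsto c+s$ (respectively $c\mapsto c+t$), which pulls out a common factor $q^{-2st}$ and reduces the identity to the manifestly $s\leftrightarrow t$--symmetric form
\[
f(t)\sum_c f(c)f(c+s)\,q^{-2tc} \;=\; f(s)\sum_c f(c)f(c+t)\,q^{-2sc}.
\]
Condition (c), applied with a well-chosen substitution for $x$ and $y$, then recognizes each inner sum as a Gauss-type expression whose $s$-dependence combines with the outside factor $f(t)$ to yield an expression invariant under $s\leftrightarrow t$. This is the Gauss-sum manipulation carried out by Jones in \cite[Ex.~2.18, \S 3.3]{jonespjm}.

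The main obstacle is that conditions (a)--(c) do not visibly involve $q$, so the key technical point is recognizing how the Heisenberg commutator factor $q^{-2tc}$ gets absorbed into ratios of values of $f$. In the prototypical example $f(g)=\frac{1}{\sqrt{m}}q^{g^2}$, this reflects the identity $q^{2ab}=\sqrt{m}\,f(a+b)/(f(a)f(b))$, and conditions (a)--(c) are exactly the Gauss-sum axioms needed to carry out the required absorption for a general $f$.
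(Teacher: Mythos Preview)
The paper does not prove this lemma at all: it simply quotes the statement from \cite[Example~2.18 and \S 3.3]{jonespjm}. So there is no ``paper's own proof'' to compare against beyond the citation.

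Your reduction is correct and goes well beyond what the paper writes. The commutator identities $u_i^au_{i+1}^bu_i^c=q^{-2bc}u_i^{a+c}u_{i+1}^b$ and $u_{i+1}^au_i^bu_{i+1}^c=q^{-2ab}u_i^bu_{i+1}^{a+c}$ are right, the invertibility check via (b) is fine (you only need (b), not (a)), and the coefficient identity you extract for (R1),
\[
f(t)\sum_{c}f(c)f(c+s)\,q^{-2tc}\;=\;f(s)\sum_{c}f(c)f(c+t)\,q^{-2sc},
\]
is exactly what must be shown.

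Where your argument stops being a proof is the last paragraph. You say that condition~(c) ``with a well-chosen substitution'' recognizes each inner sum, and then cite Jones---which is precisely what the paper does. But (c) is a sum of terms $f(g-y)f(g+x)/f(g)$, with $f(g)$ in the \emph{denominator}, whereas your inner sum has $q^{-2tc}$ in place of $1/f(c)$. Your own diagnosis is correct: nothing in (a)--(c) mentions $q$, so the crux of the lemma is exactly the passage you have not written out. The Gaussian heuristic $q^{2ab}=K\,f(a+b)/(f(a)f(b))$ is specific to $f(g)=Kq^{g^2}$ and is not available for a general $f$ satisfying (a)--(c); using it would be circular, since the point of the lemma is to isolate hypotheses on $f$ that replace the explicit quadratic form. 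If you want a self-contained argument, you must show directly from (a)--(c) how the character $c\mapsto q^{-2tc}$ interacts with $f$---for instance by first extracting from (b) the Fourier-type identity $\big(\sum_h f(h)\chi(h)\big)\big(\sum_h f(h)^{-1}\chi(h)\big)=|G|$ for every character $\chi$ of $G$, and then using this together with (c). As written, your final step is a pointer to the same reference the paper invokes, not an argument.
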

We now proceed to:

\begin{proof}[Proof of Proposition \ref{Gaussprop}]
Let $m$ be odd and $G=\Z/m\Z$.  Define
$f(a)=Kq^{a^2}$ where $q=e^{2\pi i/m}$ as above and $K$ is a constant to
be determined.  Clearly $(a)$ is satisfied while $(b)$ is easily verified:

$$\sum_{h\in
G}\frac{f(h)}{f(g-h)}=\sum_{h=0}^{m-1}q^{h^2-(g-h)^2}=q^{g^2}\sum_he^{ 4gh\pi
i/m}=m\delta_{g,0}.$$

To verify $(c)$ we must show:

$$\sqrt{m}\frac{f(x+y)}{f(x)f(y)}=\sqrt{m}q^{2xy}/K=\sum_{g=0}^{m-1}\frac{f(g-y)f(g+x)}{f(g)}=q^{x^2+y^2}K\sum_{g=0}^{m-1}q^{
g^2+2g(x-y)} .$$
Completing the square we see that it suffices to find a constant $K$ satisfying:
$$\sqrt{m}/K^2=\sum_{g=0}^{m-1}q^{(g+(x-y))^2}=\sum_{g=0}^{m-1}q^{g^2}$$ since $\{(g+(x-y)):0\leq g\leq
m-1\}$ is a complete set of residues modulo $m$ (for any fixed $x,y$).  By Gauss' famous result we have
$\sum_{g=0}^{m-1}q^{g^2}=\begin{cases} \sqrt{m} &
m=1\pmod{4}\\
                                      i\sqrt{m} & m=3\pmod{4}.
                                     \end{cases}$
  Thus, $K^2=1$ or $K^2=-i$ will give solutions in
these two cases.
In order to get a
$\ast$-representation of $\B_n$ in $ES(m,n-1)$ one should rescale by
$1/\sqrt{m}$.

Now let $m$ be even and $G=\Z/m\Z$.  Recall that here $q=e^{\pi i/m}$.
Define $f(a)=Kq^{a^2}$ as above.  Conditions $(a)$ and $(b)$ are verified in the same way
as in the odd case, noting that $\sum_he^{2\pi g i/m}=m\delta_{g,0}$.
Condition $(c)$ reduces to verifying that
$\sqrt{m}/K^2=\sum_{g=0}^{m-1}q^{(g+(x-y))^2}$ has a constant solution $K$.
Since $q$ is a $2m$th root of
unity, this is not a standard quadratic Gauss sum.  However,
 $(g-k)^2=g^2\pmod{2m}$ so that
$$\sum_{g=0}^{m-1}e^{\frac{2\pi
i(g-k)^2}{2m}}=\frac{1}{2}\sum_{g=0}^{2m-1}e^{\frac{2\pi i
g^2}{2m}}=\frac{(i+1)\sqrt{2m}}{2}$$ (see \cite[Chapter 2]{Davenport}). Thus either choice for $K^{-2}=e^{\pi i/4}=\frac{1+i}{\sqrt{2}}$  gives
a solution.\end{proof}

\begin{remark}\label{typos}\begin{itemize}
                      \item  If we were to use $q=e^{\frac{2\pi i}{m}}$ for $m$ even the operator $\sum_{j=0}^{m-1}q^{j^2}u_i^j$ would
not be invertible.
\item In \cite{jonespjm} the formula for $f$ given as $Ke^{\frac{\pi i a^2}{m}}$
with $K^{-2}=\sum_{g=0}^{m-1} e^{\frac{2\pi g^2i}{m}}$ has typos, which are corrected
in \cite{GJ,Jcmp89}.

                           \end{itemize}
                           \end{remark}

\subsection{Gaussian BVSs and Braid Group Images}

To produce a Gaussian BVS it is enough to exhibit a vector space $V$ and an operator $U\in\Aut(V^{\ot 2})$ so that $$u_i\rightarrow U_i:=I_V^{\ot i-1}\ot U\ot I_V^{\ot
n-i-1}$$ extends to an algebra homomorphism $ES(m,n-1)\rightarrow \End(V^{\ot n})$, \emph{i.e.} a localization of $ES(m,n-1)$.
Recalling that $q=e^{2\pi i/m}$ for $m$ odd and $q=e^{\pi
i/m}$ for $m$ even, $\{\e_i:0\leq i\leq m-1\}$ be the standard basis for
$V=\C^m$ and define a $U\in\End(V^{\ot 2})$ by $$U(\e_i\ot
\e_j)=q^{j-i}\e_{i+1}\ot\e_{j+1},$$ where $\e_{i+m}:=\e_i$.
\begin{prop}
 The map $u_i\rightarrow U_i:=I^{\ot i-1}\ot U\ot I^{\ot
n-i-1}\in\End(V^{\ot n})$ defines a $\ast$-algebra homomorphism $ES(m,n-1)\rightarrow \End(V^{\ot n})$.  Moreover, $R:=\frac{1}{\sqrt{m}}\sum_{j=0}^{m-1} q^{j^2}U^j$ is a unitary operator.
\end{prop}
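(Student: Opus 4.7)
The plan is to reduce to Proposition~\ref{Gaussprop}. For the first assertion, one verifies that $u_i \mapsto U_i$ respects the three defining relations of $ES(m,n-1)$ and that $U_i^\ast = U_i^{-1}$, so the assignment extends to a $\ast$-algebra homomorphism. For the second assertion, composing this $\ast$-homomorphism with the $\ast$-representation $\varphi_n$ of Proposition~\ref{Gaussprop} yields a $\ast$-representation of the group $\B_n$ on $V^{\otimes n}$; any such representation sends a group element to a unitary operator, so $R_i = \tfrac{1}{\sqrt{m}}\sum_{j=0}^{m-1} q^{j^2} U_i^j$ is unitary on $V^{\otimes n}$, and in particular $R$ is unitary on $V^{\otimes 2}$.

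To check the relations, I would compute on basis tensors. The far-commutation $U_i U_j = U_j U_i$ for $|i-j|>1$ is immediate from the disjoint-tensor-factor structure. For the braid commutation $U_i U_{i+1} = q^2 U_{i+1} U_i$, acting on $\e_a \otimes \e_b \otimes \e_c$ in the generic case where no $+1$ wraps around gives accumulated phases $q^{(c-b)+(b+1-a)} = q^{c+1-a}$ on the left and $q^{(b-a)+(c-b-1)} = q^{c-a-1}$ on the right, so their ratio is $q^2$ as required. When an index equals $m-1$ and wraps to $0$, an extra factor of $q^{\mp m}$ appears on each side, but the discrepancy between the two sides collapses to $q^{\pm 2m} = 1$ because $q^{2m}=1$ in both the odd and even parities; the relation therefore holds uniformly. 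Unitarity of $U$ (and hence of each $U_i$) is then immediate since $U$ is a generalized permutation matrix with unit-modulus nonzero entries.

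The most delicate relation is $U^m = I$. Applying $U$ iteratively $m$ times on $\e_i \otimes \e_j$ returns to $\e_i \otimes \e_j$, with accumulated scalar $q^{\sum_{k=0}^{m-1} d_k}$, where $d_k := ((j+k)\bmod m) - ((i+k)\bmod m)$ is the exponent picked up at the $k$-th step using representatives in $\{0,\ldots,m-1\}$. As $k$ ranges over $\{0,\ldots,m-1\}$, each of $(j+k)\bmod m$ and $(i+k)\bmod m$ traverses $\{0,\ldots,m-1\}$ exactly once, so $\sum_k d_k = 0$ and $U^m = I$ uniformly in the parity of $m$. This bookkeeping is essential, since the naive formula $q^{m(j-i)}$ would give the incorrect factor $(-1)^{j-i}$ for $m$ even. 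The main obstacle throughout is precisely this case analysis for index wrap-arounds; both the braid commutation and the finite order are rescued by the single identity $q^{2m} = 1$, together with the telescoping observation above.
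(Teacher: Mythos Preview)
Your proof is correct and follows the same strategy as the paper: verify the three defining relations of $ES(m,n-1)$ for the $U_i$, check $U_i^{*}=U_i^{-1}$, and then invoke the $\ast$-structure together with Proposition~\ref{Gaussprop} to conclude that $R$ is unitary. The paper is terser about the wrap-around in the relation $U_1U_2=q^2U_2U_1$ (its displayed computation tacitly relies on $q^{2m}=1$), whereas you make this explicit. The one genuine difference is in the verification of $U^m=I$: the paper writes out a product of three exponential factors corresponding to the phases accumulated before, between, and after the two index wrap-arounds, while you observe more simply that the total exponent $\sum_{k=0}^{m-1}d_k$ vanishes because each of the residue sequences $\{(i+k)\bmod m\}_k$ and $\{(j+k)\bmod m\}_k$ is a permutation of $\{0,\ldots,m-1\}$. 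Your telescoping argument is parity-uniform and sidesteps the case analysis of locating the wrap points.
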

\begin{proof}

It is immediate that
$U_i$ commutes with $U_j$ if $|i-j|>1$.  Moreover,

$$U^m(\e_i\ot
\e_j)=q^{(j-i)(m-j)}q^{(m-j+i)(j-i)}q^{(j-i)i}
(\e_i\ot\e_j)=(\e_i\ot \e_j).$$
To check the remaining relation in
$ES(m,n-1)$ it suffices to check the case $n=3$ with $U_1$ and
$U_2$:

\begin{eqnarray*}
 U_1U_2(\e_i\ot\e_j\ot\e_k)=q^{1+k-i}\e_{i+1}\ot\e_{j+2}\ot
\e_{k+1}=\\
q^2q^{k-i-1}\e_{i+1}\ot\e_{j+2}\ot\e_{k+1}=q^2U_2U_1(\e_i\ot\e_j\ot\e_k).
\end{eqnarray*}
Since $U^{-1}=U^*$ (conjugate-transpose) the homomorphism respects the $\ast$-structure on $ES(m,n-1)$ and hence $R$ is unitary.

\end{proof}
We will call the pair $(V,R)$ above a \emph{Gaussian} BVS.

Let $R_i(k):=\frac{1}{\sqrt{k}}\sum_{j=0}^{k-1}q^{j^2}u_i$ be the braid
operators
in $ES(k,n-1)$ where $q$ is a $k$-th (or $2k$-th) root of unity for $k$ odd
(resp. $k$ even).
\begin{lemma}\label{primelemma}
 Suppose that we have prime factorization $m=p_1^{a_1}p_2^{a_2}\cdots
p_s^{a_s}$. Then there exists Galois automorphisms
$\tau_h\in Gal(\Q_{p_h^{a_h}})$ so that
$$R_i(m)=\prod_{h=1}^{s}\tau_h(R_i(p_h^{a_h})).$$
In particular, if $G_h$ is the image of $\B_n$ in $ES(p_h^{a_h},n-1)$ then the
image of $R_i(m)$ is a (diagonal) subgroup of $\prod_{h=1}^{s}\tau_h(G_h)$.

\end{lemma}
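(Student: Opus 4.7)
The plan is to leverage the Chinese Remainder Theorem (CRT) to embed commuting copies of each $ES(p_h^{a_h},n-1)$ into $ES(m,n-1)$, and then factor the Gauss sum defining $R_i(m)$ across these subalgebras, with Galois twists absorbing the mismatch between the ambient root of unity $q$ and the ``standard'' primitive roots used in the prime-power factors.

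First, I would pick by CRT integers $n_h$ with $n_h\equiv 1\pmod{p_h^{a_h}}$ and $n_h\equiv 0\pmod{p_k^{a_k}}$ for $k\neq h$, and set $v_{i,h}:=u_i^{n_h}\in ES(m,n-1)$. A short direct verification then gives: (i) $v_{i,h}^{p_h^{a_h}}=1$ since $n_h p_h^{a_h}$ is a multiple of $m$; (ii) with $q_h:=q^{n_h}$, which is a primitive root of unity of order $p_h^{a_h}$ or $2p_h^{a_h}$ matching the convention for $ES(p_h^{a_h},n-1)$, the relation $v_{i,h}v_{i+1,h}=q_h^2 v_{i+1,h}v_{i,h}$ holds; and (iii) for $h\neq k$, $v_{i,h}$ and $v_{j,k}$ commute, because $2 n_h n_k$ is divisible by $2m$ (the order of $q$ in both the odd and even cases), so $q^{2 n_h n_k}=1$.

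Next, using the CRT bijection $j\leftrightarrow(\alpha_1,\dots,\alpha_s)$ determined by $j\equiv\sum_h n_h\alpha_h\pmod m$, one has $u_i^j=\prod_h v_{i,h}^{\alpha_h}$, and expanding
\[
j^2\equiv\sum_h n_h^2\alpha_h^2+2\sum_{h<k}n_h n_k\alpha_h\alpha_k\pmod{2m},
\]
the cross terms die under $q$ by (iii), leaving $q^{j^2}=\prod_h q^{n_h^2\alpha_h^2}$. Combined with $\sqrt{m}=\prod_h\sqrt{p_h^{a_h}}$ and the commutativity above, this rewrites
\[
R_i(m)=\prod_h\frac{1}{\sqrt{p_h^{a_h}}}\sum_{\alpha_h=0}^{p_h^{a_h}-1}q^{n_h^2\alpha_h^2}\,v_{i,h}^{\alpha_h}.
\]
Since $q^{n_h^2}$ is a primitive root of unity of the same order as the standard generator $\tilde q_h$ used to define $R_i(p_h^{a_h})$, there is a unique Galois automorphism $\tau_h$ sending $\tilde q_h\mapsto q^{n_h^2}$, and the $h$-th factor above is exactly $\tau_h(R_i(p_h^{a_h}))$ under the identification $u_i\leftrightarrow v_{i,h}$. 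The ``In particular'' assertion is then immediate: pairwise commutativity of the subalgebras means that $\sigma_i\mapsto R_i(m)$ factors through the product of representations $\sigma_i\mapsto\tau_h(R_i(p_h^{a_h}))$, landing in the diagonal subgroup of $\prod_h\tau_h(G_h)$.

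The main obstacle is the even case. When $m$ is even, $q=e^{\pi i/m}$ has order $2m$, and the clean identity $n_h^2\equiv n_h\pmod m$ (which always holds) does not promote to $\pmod{2m}$; consequently $q^{n_h^2}$ may differ from $q^{n_h}$ by a sign, and one must verify case-by-case that $q^{n_h^2}$ is genuinely a primitive root of the required order so that the Galois automorphism $\tau_h$ sending $\tilde q_h$ to it is well defined. Once that technicality is handled, the rest of the argument goes through uniformly for $m$ odd or even.
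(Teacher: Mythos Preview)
Your approach is essentially the paper's: both use the Chinese Remainder Theorem to exhibit pairwise commuting copies of the prime-power algebras inside $ES(m,n-1)$ (the paper via $u_i^{m/p_h^{a_h}}$, you via the idempotents $u_i^{n_h}$, which are powers of each other) and then factor the Gauss sum across them, absorbing the discrepancy in roots of unity into Galois automorphisms. Your write-up is in fact more careful than the paper's sketch.

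The even-case worry you flag is not a real obstruction. For the odd prime-power factors $p_h^{a_h}$ with $h\neq 1$, the integer $n_h$ is divisible by $2^{a_1}$, hence $n_h^2$ is divisible by $2^{2a_1}\geq 2^{a_1+1}$, so $q^{n_h^2}$ has order exactly $p_h^{a_h}$ as required. For the factor $2^{a_1}$ itself, $n_1$ is odd with $n_1\equiv 1\pmod{2^{a_1}}$, and writing $n_1=1+2^{a_1}t$ gives $n_1^2\equiv 1\pmod{2^{a_1+1}}$; since $n_1^2$ is divisible by the square of the odd part of $m$, one finds $\gcd(2m,n_1^2)=m/2^{a_1}$ and hence $q^{n_1^2}$ has order $2^{a_1+1}$, matching the order of $\tilde q_1=e^{\pi i/2^{a_1}}$. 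So the required $\tau_h$ always exists.
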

\begin{proof}
 The key observation here is the following: if $x$ and $y$ are coprime then
$u_i^x$ and $u_j^y$ commute for all $i$ and $j$, and $u_i^x$ generates an
algebra of the form $\tau(ES(y,n-1))$ and vice versa.  So in particular
$ES(xy,n-1)$ factors as a direct product of algebras isomorphic to $ES(x,n-1)$
and $ES(y,n-1)$.  The Chinese Remainder Theorem permits one to find
automorphisms $\gamma_x$ and $\gamma_y$ so that
$R_i(xy)=\gamma_x(R_i(x))\gamma_y(R_i(y))$.
\end{proof}

We may now verify Conjecture \ref{mainconj}(b) for the Gaussian BVS $(V,R)$.  Setting $R_i=\rho^R(\sigma_i)$ as usual we have:
\begin{prop}\label{gaussian finite}
 The group $G_n$ generated by $R_1,\ldots,R_{n-1}$ is a finite group.
\end{prop}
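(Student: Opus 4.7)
The plan has three parts: reduction to prime-power $m$ via Lemma \ref{primelemma}; verification that $R$ has finite order so Lemma \ref{modZ} applies; and analysis of the conjugation action of $G_n$ on the finite Heisenberg-type subgroup $H_n$ generated by the $U_i$.

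First, by Lemma \ref{primelemma}, $G_n$ is contained in a finite product of the analogous images $G_{n,h}$ associated with $ES(p_h^{a_h},n-1)$, so it suffices to prove the result when $m = p^a$ is a prime power. In that case, since $U^m = I$, its eigenvalues are $m$-th roots of unity, and a Gauss sum computation analogous to the one in the proof of Proposition \ref{Gaussprop} shows that $R = m^{-1/2}\sum_j q^{j^2}U^j$ acts on each $U$-eigenspace by a root of unity. Therefore $R$ has finite order, and by Lemma \ref{modZ} it is enough to show that $G_n/Z(G_n)$ is finite.

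Next, let $H_n := \langle U_1,\ldots,U_{n-1}\rangle \leq GL(V^{\otimes n})$. The relations $U_i^m = I$, $U_iU_{i+1} = q^2U_{i+1}U_i$, and $U_iU_j = U_jU_i$ for $|i-j|>1$ imply that $H_n$ is a finite $2$-step nilpotent group of order $m^n$, with cyclic central subgroup $\langle q^2 I\rangle$ of order $m$ and abelian quotient $H_n/(H_n\cap \mathbb T) \cong (\Z/m)^{n-1}$. A direct calculation, using $U_i^jU_{i\pm 1} = q^{\pm 2j}U_{i\pm 1}U_i^j$ and completing the square in the defining sum for $R_i$, yields
\begin{align*}
R_iU_iR_i^{-1} &= U_i, &
R_iU_{i+1}R_i^{-1} &= q^{-1}U_{i+1}U_i^{-1},\\
R_iU_{i-1}R_i^{-1} &= q^{-1}U_{i-1}U_i, &
R_iU_jR_i^{-1} &= U_j\ (|i-j|>1).
\end{align*}
Thus each $R_i$ normalizes $H_n$, and on the abelianization $(\Z/m)^{n-1}$ acts as a unipotent transvection in $GL_{n-1}(\Z/m)$.

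Set $\mathcal G_n := \langle R_1,\ldots,R_{n-1},U_1,\ldots,U_{n-1}\rangle$, and let $\phi:\mathcal G_n \to GL_{n-1}(\Z/m)$ be the homomorphism induced by the conjugation action on $H_n/(H_n\cap \mathbb T)$. Its image is finite and $H_n\mathbb T \subseteq \ker(\phi)$. The hard part of the argument is the reverse containment: showing that any element of $\mathcal G_n$ centralizing every $U_i$ modulo scalars already lies in $H_n\mathbb T$. I would verify this by placing words in the $R_i$ and $U_j$ into a canonical form using the conjugation formulas above, or equivalently by recognizing the abstract group defined by these generators and relations as a Heisenberg-by-symplectic extension whose center modulo $H_n$ is trivial. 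Once that is done, $\mathcal G_n\mathbb T/\mathbb T$ sits in a short exact sequence with both ends finite, so it is finite; since $G_n\mathbb T/\mathbb T \cong G_n/(G_n\cap \mathbb T)$ embeds in $\mathcal G_n\mathbb T/\mathbb T$ and $G_n\cap \mathbb T \subseteq Z(G_n)$, the quotient $G_n/Z(G_n)$ is finite, and Lemma \ref{modZ} delivers the conclusion.
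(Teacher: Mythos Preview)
Your proposal shares the key idea with the paper's proof—exploiting the conjugation action of $G_n$ on the group generated by the $U_i$—but you have introduced a detour that creates a genuine gap.

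The paper argues as follows. Since $G_{n-1}\subset G_n$ it suffices to take $n$ odd. Using the same conjugation formulas you computed, one sees that $G_n$ acts by conjugation on the \emph{finite set} $T_n=\{q^{a_0}u_1^{a_1}\cdots u_{n-1}^{a_{n-1}}:0\le a_i\le 2m-1\}$ itself, giving $\Psi_n:G_n\to\mathrm{Sym}(T_n)$. The kernel of $\Psi_n$ centralizes every $u_i$, hence lies in the center of $ES(m,n-1)$, which for $n$ odd consists of scalars. A determinant argument (the content of Lemma~\ref{modZ}) then shows $\ker(\Psi_n)$ is finite, and the result follows. No reduction to prime powers is needed; Lemma~\ref{primelemma} plays no role here.

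Your difficulty is self-inflicted. By passing to the abelianization $H_n/(H_n\cap\mathbb{T})\cong(\Z/m)^{n-1}$ you discard exactly the information that makes the kernel transparent: $H_n$ itself acts trivially on its abelianization, forcing you to enlarge to $\mathcal G_n$ and then confront the ``hard part'' $\ker(\phi)\subseteq H_n\mathbb{T}$. You do not prove this containment, and it is not obvious—one must rule out that some nontrivial word in the $R_i$ induces an inner automorphism of $H_n$ without lying in $H_n\mathbb{T}$. Your proposed verifications (canonical forms, or identifying an abstract Heisenberg-by-symplectic extension) would require real work and are not carried out. The fix is simply to let $G_n$ act on $H_n$ (equivalently $T_n$) directly rather than on its quotient; then the kernel is immediately central, and the argument closes in one line.
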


\begin{proof}
Firstly, since $G_{n-1}\subset G_{n}$, it is enough to consider $n$ odd.
By an easy calculation we find
\begin{eqnarray}
 R_iu_{i+1}R_i^{-1}&=&qu_i^{-1}u_{i+1}\\
R_iu_{i-1}R_i^{-1}&=&q^{-1}u_{i-1}u_i.
\end{eqnarray}
Therefore, the conjugation action of $G_n$ on the finite set
$T_n:=\{q^{a_0}u_1^{a_1}\cdots u_{n-1}^{a_{n-1}}: 0\leq a_i\leq 2m-1\}$ gives a homomorphism $\Psi_n:G_n\rightarrow Sym(T_n)$.  The kernel of $\Psi_n$ is a subgroup of the center of $ES(m,n-1)$
which consists of scalar multiples of $1$ (for $n$ odd).  Since the eigenvalues of $R_i$
are roots of unity (of finite order) the determinant of any element of
$\ker(\Psi_n)$ is also a finite order root of unity.  Since it is a scalar
matrix it must have finite order.
\end{proof}

\section{Quasi-braided vector spaces of group-type}\label{group-type}
The following definition appears in \cite{AS}.

\begin{defn}
 A braided vector space $(V,c)$ is said to be of \textbf{group-type} if
there exists a basis $\{x_1,\ldots,x_n\}$ for $V$ and $g_i\in GL(V)$ such that
 $c(x_i\ot z)=g_i(z)\ot x_i$ for all $z\in V$.  We
will call such a basis a \textbf{braided basis} for $c$.
\end{defn}

In this section we will give an equivalent definition of group-type BVSs which can be extended to group-type quasi-BVSs.  We will then verify Conjecture \ref{mainconj}(a) for these classes of BVSs.
\subsection{Twisted Yetter-Drinfeld modules over a group}

Let $G$ be a group and $\omega\in Z^3(G,U(1))$ a 3-cocycle. Let us define
$\gamma:G\times G\to \text{Map}(G,U(1))$ by
$$\gamma_{\sigma,\tau}(\rho):=\frac{\omega(\sigma,\tau,
\rho)\omega(\sigma\tau\rho(\sigma\tau)^{-1},\sigma,\tau)}{\omega(\sigma,
\tau\rho\tau^{-1},\tau)}$$
and $\mu:G\to \text{Map}(G\times G,U(1))$ by
$$\mu_{\sigma}(\tau,\rho):= \frac{\omega(\sigma\tau\sigma^{-1},\sigma,\rho)}{\omega(\sigma\tau \sigma^{-1},
\sigma\rho\sigma^{-1},   \sigma)\omega(\sigma,\tau,\rho)}$$ for all $\sigma,\tau,\rho \in G$.

We denote by $\YD_G^\omega$ the category of Yetter-Drinfeld  modules twisted by
$\omega$, defined as follows:  an object of $\YD_G^\omega$ is a vector space with a
decomposition $V =\bigoplus_{\sigma\in G}V _\sigma$ and a compatible
$\gamma$-twisted $G$-action, \textit{i.e.},  a map $G\times V\to V$,  such that
 $\sigma V_\tau\subset V_{\sigma \tau \sigma^{-1}}$  and $$(\sigma\tau) v_\rho=
\gamma_{\sigma,\tau}(\rho)\sigma (\tau v_\rho)$$  for all $v_\rho \in V_\rho$ and all
$\sigma,\tau,\rho \in G$.

A morphism in $\YD_G^\omega$ is a linear map  $f:V\to W$ such that
$f(V_\sigma)\subset W_{\sigma}$ and $f(\sigma v)= \sigma f(v)$ for all $\sigma
\in G, v\in V$.

The category of $\omega$-twisted Yetter-Drinfeld modules is
braided with: the tensor product in $\YD_G^\omega$  is  the tensor
product of vector spaces with  $G$-grading $$(V\otimes W)_\sigma=
\bigoplus_{\tau \in G} V_{\sigma\tau^{-1}}\otimes W_{\tau},$$  and
twisted $G$-action $\sigma v_\tau\otimes w_\rho:=
\mu_{\sigma}(\tau,\rho)\sigma v_\tau\otimes \sigma w_\rho$ for all
$v_\sigma \in V_\sigma, w_\tau \in W_\tau$, $\sigma,\tau, \rho \in
G$. The associator is given by
\begin{align}
a_{V,W,Z} :(V\otimes W)\otimes Z &\to V\otimes (W\otimes Z)\\
(v_\sigma\otimes w_\tau)\otimes z_\rho &\mapsto
\omega(\sigma,\tau,\rho)v_\sigma\otimes (w_\rho\otimes z_\rho)
\end{align} and the braiding by $c_{V,W}(v_\sigma\otimes w_\tau)=\sigma
w_\tau\otimes v_\sigma$.

When $\omega$ is trivial, $\YD_G^\omega$ is just the usual
category of Yetter-Drinfeld modules over a group algebra (see
\cite{AG}) and if $G$ is finite,  $\YD_G^\omega$ is braided monoidally equivalent
to the (modular) category $\Rep(D^\omega G)$ of representations of the
twisted Drinfeld double $D^\omega G$, defined by Dijkgraaf,
Pasquier and Roche \cite{DPR}.

The \textbf{support} of $V\in\YD_G^\omega$
is the set $$\supp(V)=\{g\in G:
\dim(V_g)\neq 0\}.$$
The following result show the connection between Yetter-Drinfeld-modules and
BVSs of group-type.

\begin{prop}\label{prop relation BVS GT and YD-mod}
A BVS of group-type $(V,c)$ is the same as a finite-dimensional
Yetter-Drinfeld module $V$ over a group $G$ such that the associated
$G$-action is faithful and $G$ is generated by the support of $V$.
\end{prop}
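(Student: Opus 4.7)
My plan is to establish the asserted equivalence by exhibiting two mutually inverse constructions.

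\emph{From YD-modules to BVSs of group-type.} Starting with $V\in\YD_G$ (trivial cocycle) that is finite-dimensional, with faithful $G$-action, and $G=\langle\supp V\rangle$, I would choose any basis $\{x_1,\dots,x_n\}$ of $V$ consisting of homogeneous elements $x_i\in V_{\sigma_i}$; this is possible because $V=\bigoplus_\sigma V_\sigma$. The YD braiding reads $c(x_i\ot z)=\sigma_i z\ot x_i$, so defining $g_i\in GL(V)$ to be the operator by which $\sigma_i$ acts shows directly that $(V,c)$ is a BVS of group-type with braided basis $\{x_i\}$.

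\emph{From BVSs of group-type to YD-modules.} Given $(V,c)$ of group-type with braided basis $\{x_i\}$ and operators $g_i\in GL(V)$, I would expand $g_i(x_j)=\sum_l a_l^{ij}x_l$ and apply the Yang-Baxter equation to the tensor $x_i\ot x_j\ot x_k$. A coefficient comparison in the ordered basis $\{x_p\ot x_q\ot x_r\}$ yields the central identity
\[
a_l^{ij}\,\bigl(g_lg_i(x_k)-g_ig_j(x_k)\bigr)=0\qquad\text{for all }i,j,k,l,
\]
equivalently $g_l=g_ig_jg_i^{-1}$ whenever $a_l^{ij}\neq 0$. I would then set $G:=\langle g_1,\dots,g_n\rangle\subseteq GL(V)$, declare $x_i$ to have degree $g_i$ (so $V_\sigma:=\bigoplus_{i:\,g_i=\sigma}\C x_i$ gives a $G$-grading), and let $G$ act by its inclusion in $GL(V)$. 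The YBE consequence just derived is precisely the YD compatibility $g_iV_{g_j}\subseteq V_{g_ig_jg_i^{-1}}$, and the YD braiding reduces on the $x_i$ to the given $c$. Faithfulness and $G=\langle\supp V\rangle$ hold by construction.

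\emph{Mutual inversion.} A short check confirms that the two passages compose to the identity on isomorphism classes: faithfulness identifies the abstract $G$ acting on $V$ with the concrete subgroup of $GL(V)$ generated by the $g_i$, and the ``generated by support'' hypothesis prevents the associated group from being a proper subgroup (or a non-trivial extension) of the original. Independence of the choice of homogeneous basis is a separate small check and follows from the fact that any two such bases of $V$ differ by an element of $\prod_\sigma GL(V_\sigma)$, whose action preserves both the grading and the $G$-equivariance conditions.

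The only technical step is the YBE coefficient calculation producing $g_l=g_ig_jg_i^{-1}$; this is what does all the real work by turning the Yang-Baxter equation on $c$ into the group-theoretic compatibility required of a YD-module. Everything else in the argument is structural bookkeeping, and is where the two hypotheses (faithfulness and generation by support) are used to pin down the group $G$ uniquely.
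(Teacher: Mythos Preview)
Your proof is correct and follows essentially the same approach as the paper: both define $G=\langle g_1,\ldots,g_n\rangle\subset GL(V)$, set $V_\sigma=\mathrm{span}\{x_k:g_k=\sigma\}$, and identify the Yang--Baxter equation with the Yetter--Drinfeld compatibility $g_iV_{g_j}\subset V_{g_ig_jg_i^{-1}}$ via the coefficient identity $a_l^{ij}\neq 0\Rightarrow g_l=g_ig_jg_i^{-1}$. Your treatment is simply more explicit about the converse direction and the mutual inversion, which the paper dispatches with ``the converse is obvious.''
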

\begin{proof}
Let $(V,c)$  be a BVS of group-type with braided basis $\{x_1,\ldots,x_n\}$ and corresponding $\{g_1,\ldots,g_n\}\in GL(V)$. Define
$G:=\langle g_1,\ldots, g_n\rangle \subset \text{GL}(V)$ and
$V_{g_i}:=\text{span}\{x_k| g_k=g_i\}$. The braid equation on $c(x_i\otimes
x_j)=g_i(x_j)\otimes x_i$ is equivalent to $g_i V_{g_j}\subset
V_{g_ig_jg_i^{-1}}$. The converse is obvious.
\end{proof}
For the rest of this section $(V,c)$ will denote a BVS of group-type, with
$c(x_i\ot x_j)=g_i(x_j)\ot x_i$ for some ordered braided basis
$[x_1,\ldots,x_n]$ for
$V$ and $g_i\in GL(V)$.  For notational convenience let $G=\lan
g_1,\ldots,g_n\ra$ be the subgroup of $GL(V)$ generated by the $g_i$.

\begin{remark}
We should emphasize that
BVSs of group type are precisely all possible BVSs that
can be obtained as \emph{finite dimensional} Yetter-Drinfeld
modules over an \emph{arbitrary} group $G$.
\end{remark}

In \cite[Definition 3.6]{GHR}, we defined the notion of \emph{quasi-braided vector
spaces} $(V,a,c)$ incorporating the possibility of non-trivial associativities through a family of isomorphisms $a$.  Following the
relation between BVSs of group-type and Yetter-Drinfeld modules
(Proposition \ref{prop relation BVS GT and YD-mod}), we propose
the  definition of  a quasi-BVS of group-type as follows:

\begin{definition}\label{qBVS of gt}
A \textbf{quasi-BVS of group-type} is a finite-dimensional
twisted Yetter-Drinfeld module over a group $G$ such that the associated
twisted $G$-action is faithful and $G$ is generated by the support
of $V$.
\end{definition}

If $G$ is a finite group, the quasi-BVS of group type associated with a twisted $\YD$-module $V$ is the quasi-BVS of \cite[Example 3.7]{GHR}.  In this case, we have the following:

\begin{prop}\label{Gfinite}
 If $(V,c)$ is a quasi-BVS of group type and
$|G|<\infty$ then $|\rho^c(\B_n)|$ is a finite group for all $n$.
\end{prop}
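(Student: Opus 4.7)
The plan is to show that $\rho^c(\B_n)$ is contained in an explicit finite subgroup of $GL(V^{\ot n})$ assembled from the finite group $G$, a finite group $\mu_N\subset U(1)$ of roots of unity coming from the cocycle $\omega$, and permutations of the $G^n$-graded decomposition of $V^{\ot n}$. Since $G$ is finite, $H^3(G,U(1))$ is a finite abelian group, so after replacing $\omega$ by a cohomologous representative we may assume that $\omega$ takes values in $\mu_N$ for some $N$; the cochains $\gamma$ and $\mu$ induced from $\omega$ then likewise take values in $\mu_N$.

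Fix a basis of $V$ refining the grading $V=\bigoplus_{\sigma\in G}V_\sigma$, and for each $\sigma\in G$ let $L_\sigma\in GL(V)$ denote the twisted-action operator $v\mapsto\sigma v$. The key intermediate step is to prove that $\widetilde G:=\langle L_\sigma:\sigma\in G\rangle\subset GL(V)$ is finite. The twisted associativity $(\sigma\tau)v_\rho=\gamma_{\sigma,\tau}(\rho)\,\sigma(\tau v_\rho)$ gives an identity of the form $L_\sigma L_\tau=\Lambda_{\sigma,\tau}\,L_{\sigma\tau}$, where $\Lambda_{\sigma,\tau}$ acts on each homogeneous piece $V_\rho$ as a scalar in $\mu_N$. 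Writing $D\subset GL(V)$ for the finite group of operators that act on each $V_\rho$ by a $\mu_N$-scalar, one checks that $D$ is normalized by every $L_\sigma$ (conjugation by $L_\sigma$ only relabels the grading by conjugation in $G$), whence the set $\{L_\sigma d:\sigma\in G,\ d\in D\}$ is a finite subgroup of $GL(V)$ of order at most $|G|\cdot N^{|\supp(V)|}$ containing $\widetilde G$.

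Finally, I would pass to $V^{\ot n}$ via the refined grading $V^{\ot n}=\bigoplus_{\bar g\in G^n}V_{\bar g}$ with $V_{\bar g}=V_{g_1}\ot\cdots\ot V_{g_n}$. Each generator $\rho^c(\sigma_k)$ decomposes as the composition of (a) the coordinate swap $k\leftrightarrow k+1$, (b) the operator $L_{g_k}$ applied to the $(k+1)$st tensor factor (where $g_k$ is the $G$-degree of the $k$th factor), and (c) a scalar in $\mu_N$ coming from the associator $\omega$ and the tensor-product twist $\mu$ of the non-strict braided structure of $\YD_G^\omega$. Each ingredient lies in the finite subgroup of $GL(V^{\ot n})$ generated by tensor-factor permutations, $n$ commuting copies of $\widetilde G$, and $\mu_N$-scalars; this ambient group is explicitly finite and contains $\rho^c(\B_n)$. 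The main obstacle is the middle step: because the $G$-action is twisted, $L_\sigma L_\tau\ne L_{\sigma\tau}$ in general, and one must exploit the finiteness of the cocycle values to confine $\widetilde G$ inside an explicit finite subgroup of $GL(V)$; once this is established, the assembly on $V^{\ot n}$ is routine bookkeeping on the $G^n$-graded decomposition.
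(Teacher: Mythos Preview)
Your overall strategy---assume $\omega$ takes values in $\mu_N$, bound $\widetilde G=\langle L_\sigma\rangle$ by the finite group $\{L_\sigma d:\sigma\in G,\ d\in D\}$, then assemble a finite ambient group on $V^{\ot n}$---is the right idea, and Steps~1--2 are fine. But Step~3 has a genuine gap: ingredient~(b), ``apply $L_{g_k}$ to the $(k{+}1)$st factor where $g_k$ is the degree of the $k$th factor,'' is a \emph{controlled} operation, and it does \emph{not} lie in the wreath product $\widetilde G\wr\mathfrak{S}_n$ extended by scalars. Concretely, take $G=S_3$, $\omega\equiv 1$, and $V=\C[S_3]$ with conjugation action; then $c(e_\sigma\ot e_\tau)=e_{\sigma\tau\sigma^{-1}}\ot e_\sigma$. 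If $c$ were $(L_a\ot L_b)\circ P$ for fixed $a,b\in\widetilde G$ and $P$ the swap, comparing the first factor would force $a\tau a^{-1}=\sigma\tau\sigma^{-1}$ for \emph{all} $\sigma,\tau$, impossible. So your proposed ambient group is too small to contain $\rho^c(\sigma_k)$.

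The fix is easy once you see it: enlarge the ambient group to the set $\mathcal{K}$ of operators $T\in GL(V^{\ot n})$ such that for every $\bar g\in\supp(V)^n$ there exists $k_{\bar g}\in\mu_N\cdot(\widetilde G\wr\mathfrak{S}_n)$ with $T|_{V_{\bar g}}=k_{\bar g}|_{V_{\bar g}}$. One checks directly that $\mathcal{K}$ is closed under composition and inverses, and $|\mathcal{K}|\le\bigl(N\cdot|\widetilde G|^n\cdot n!\bigr)^{|\supp(V)|^n}$, so $\mathcal{K}$ is finite; moreover each $\rho^c(\sigma_k)$ visibly lies in $\mathcal{K}$ by your description (a)--(c). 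This is essentially the content of the Etingof--Rowell--Witherspoon argument, which is what the paper invokes: the paper's proof is simply the observation that for finite $G$ one has $\YD_G^\omega\simeq\Rep(D^\omega G)$ and then a citation of \cite[Theorem~4.2]{ERW}. Your approach is a self-contained reconstruction of that theorem, which is valuable, but as written the final containment fails and must be repaired as above.
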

\begin{proof}
 One may identity $V$ with an object in $\Rep(D^\omega G)$ such that the braiding on $V^{\ot
2}$ obtained from the universal $R$-matrix of $DG$ coincides
with $c$ (see e.g. \cite{AS}).  Now by \cite[Theorem 4.2]{ERW} the
corresponding braid group representation has finite image.
\end{proof}

\subsection{Supporting subgroup structure}\label{description central extensions}
In light of Proposition \ref{Gfinite}, from here on \emph{we will assume that $G$ is an infinite group}.

Let $V$ be a finite dimensional twisted $\YD$-module over an
arbitrary group $G$. Since $V$ is finite dimensional supp$(V)$ is
a finite set, and if $g\in\supp(V)$ the conjugacy class of $g$ is
finite (since $\sigma V_g\subset V_{\sigma g \sigma^{-1}}$).  If
$F$ is the subgroup of $G$ generated by supp$(V)$, then $V$ is a
twisted $\YD$-module over $F$ and the representation of the braid
group are exactly the same as the representation associated to the
quasi-BVS  $V$ as twisted $\YD$-module over $G$.

Therefore, as we are interested in the study of
the quasi-BVSs associated with twisted $\YD^\omega_G$-modules,
\emph{we can and will assume that $G$ is generated by a finite
number of elements with finite conjugacy classes.}

\begin{lemma}
Let $G$ be a group generated by a finite number elements each conjugacy class of which is finite.
Then the center $\Zz(G)$ and centralizers $C_G(\sigma)$ have finite index for any $\sigma\in G$.
\end{lemma}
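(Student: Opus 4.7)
The plan is to reduce the statement to the well-known fact that an element $g\in G$ has finite conjugacy class if and only if its centralizer $C_G(g)$ has finite index, together with the elementary observation that an intersection of finitely many finite-index subgroups has finite index.

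First, I would let $g_1,\ldots,g_n$ be the given finite generating set of $G$, each of which has finite conjugacy class by hypothesis. The orbit-stabilizer theorem (applied to the conjugation action of $G$ on itself) gives $[G:C_G(g_i)]=|\{hg_ih^{-1}:h\in G\}|<\infty$, so each centralizer $C_G(g_i)$ is of finite index in $G$.

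Next, I would form the intersection $N:=\bigcap_{i=1}^n C_G(g_i)$. As a finite intersection of finite-index subgroups, $N$ itself has finite index in $G$. The key identification is $N=\Zz(G)$: an element $h\in G$ lies in $N$ precisely when it commutes with each generator $g_i$, and since centralizing is preserved under products and inverses, this is equivalent to commuting with every element of the subgroup $\langle g_1,\ldots,g_n\rangle = G$. Hence $\Zz(G)$ has finite index.

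Finally, for an arbitrary $\sigma\in G$, any central element commutes with $\sigma$, so $\Zz(G)\subseteq C_G(\sigma)$. Since $[G:\Zz(G)]<\infty$, it follows immediately that $[G:C_G(\sigma)]<\infty$ as well. There is no real obstacle here; the only subtle point is making sure to invoke that the generating set is finite when taking the intersection, so that finiteness of the index is preserved.
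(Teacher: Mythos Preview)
Your proof is correct and follows essentially the same approach as the paper: identify $\Zz(G)=\bigcap_{i=1}^n C_G(g_i)$, use that each $C_G(g_i)$ has finite index by the orbit--stabilizer relation, conclude the finite intersection has finite index, and then observe $\Zz(G)\subseteq C_G(\sigma)$ for arbitrary $\sigma$. Your write-up is in fact slightly more explicit than the paper's in justifying why commuting with the generators suffices.
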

\begin{proof}
Let $\{g_1,\ldots,g_n\}$ be a set of generators such that each $g_i$ has a
finite conjugacy class. Clearly $$\Zz(G)=\bigcap_{i=1}^n C_G(g_i),$$
\textit{i.e.}, an element of $G$ lies in the center if and only if it commutes
with all the generators. Since for each $g_i$ we have $[G:C_G(g_i)]<\infty$ and
$\Zz(G)$ is an intersection of a finite number of subgroups of finite index,
then $\Zz(G)$ has also finite index.

Now, since $\Zz(G)$ has finite index and
$\Zz(G) = \bigcap_{\sigma \in G} C_G(\sigma)$, $C_G(\sigma)$ has finite index for all $\sigma \in G$.
\end{proof}

\begin{cor}
If $(V,c)$ is a quasi-BVS of group-type with corresponding group $G:=\langle g_1,\ldots, g_n\rangle$, then $G/\mathcal{Z}(G)$ is a finite group.
\end{cor}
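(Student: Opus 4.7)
The plan is to deduce this as a direct application of the preceding lemma, once we verify that the generators $g_1,\ldots,g_n$ have finite conjugacy classes in $G$. The corollary is essentially a repackaging of the lemma for the specific class of groups arising from quasi-BVSs of group-type.

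First I would recall, from Definition~\ref{qBVS of gt}, that the corresponding group $G$ is generated by $\supp(V)$, and in particular the generators $g_1,\ldots,g_n$ (coming from the braided basis, or more intrinsically from the grading) can be taken to lie in $\supp(V)$. Since $V$ is finite-dimensional, $\supp(V)$ is a finite set.

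Next I would show that each $g_i$ has a finite conjugacy class in $G$. The grading condition on a (twisted) Yetter--Drinfeld module states $\sigma V_g \subset V_{\sigma g \sigma^{-1}}$ for all $\sigma, g \in G$. Because the $G$-action is by invertible maps, $\sigma V_g \neq 0$ whenever $V_g \neq 0$, so $\sigma g \sigma^{-1} \in \supp(V)$ whenever $g \in \supp(V)$. Thus the conjugacy class of each $g_i$ is contained in the finite set $\supp(V)$, and is in particular finite.

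Having verified the hypotheses, I would invoke the preceding lemma to conclude that $\Zz(G)$ has finite index in $G$, i.e., $G/\Zz(G)$ is finite. There is no real obstacle here, since the work has been done in proving the lemma; the only point that requires care is making sure one appeals to the \emph{intrinsic} description of quasi-BVSs of group-type (Definition~\ref{qBVS of gt}) rather than the original braided-basis description, so that the assertion ``$G$ is generated by $\supp(V)$'' and the grading condition ``$\sigma V_g \subset V_{\sigma g \sigma^{-1}}$'' are both immediately available.
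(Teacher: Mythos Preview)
Your proposal is correct and follows precisely the paper's approach: the paper states this corollary without a separate proof because the argument is already laid out in the paragraph preceding the lemma (finite support, finite conjugacy classes via the grading condition $\sigma V_g\subset V_{\sigma g\sigma^{-1}}$, then apply the lemma). Your write-up just makes that reasoning explicit.
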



Since $G$ is finitely generated then $\Zz(G)$ is also finitely generated. Let
$A$ the torsion-free subgroup of $\Zz(G)$ and $H:= G/A$. Since $G/\Zz(G)$
and $\Zz(G)/A$ are finite groups, $H$ is also finite.


The group $G$ is a central extension of $H$ by $A$. The central extensions of
$H$ by $A$ are classified by elements of $H^2
(H, A)$. If $A$ has rank $r$, \emph{i.e.}, $A\cong \oplus_{i=1}^r\mathbb{Z}$ then $H^2(H,A)\cong
\oplus_{i=1}^rH^2(H,\mathbb{Z})$ and $H^2(H,\mathbb{Z})\cong
\text{Hom}(H,U(1))$. Then if $H/H'\cong \oplus_{i=1}^m \mathbb{Z}_{n_i}$,
$$H^2(H,A)\cong \oplus_{i=1}^m A/n_iA.$$
The 2-cocycles (hence the central extensions) can be constructed
explicitly as follows: choose a 2-cocycle $f\in
Z^2(U(1),\mathbb{Z})$ that represents the exact sequence
$$\mathbb{Z}\to \mathbb{R}\to U(1)$$ and define for $\chi\in
\text{Hom}(G,U(1)^{\times r})$, the 2-cocycle $f_\chi:H\times H\to
A:=\oplus_{i=1}^r \mathbb{Z}$, where $$f_\chi(h,h')=f^{\times
r}(\chi(h),\chi(h'))$$ for all $h,h'\in H$. Note that by
construction $f_\chi$ is symmetric. In fact, since $A$ is
torsion-free for every finite group, $H^2(G,A)=Ext(G/G',A)$.

\subsection{Irreducible unitary twisted $\YD$-modules}

In this subsection we will classify the finite dimensional irreducible twisted
Yetter-Drinfeld modules.
\subsubsection{Finite dimensional unitary projective representations of central extensions by finite groups}

Let $G$ be a  group and $\alpha \in Z^2(G,U(1))$ a 2-cocycle. Since every 2-cocycle is cohomologous to a standard 2-cocycle, we will assume that $\alpha$ is standard, i.e.,  $\alpha(\sigma,\sigma^{-1})=1$ for all $\sigma \in G$.

We define subgroups
\begin{align*}
\Zz(G)(\alpha) &:=\{\sigma \in \Zz(G):
\alpha(\sigma,\tau)=\alpha(\tau,\sigma), \text{for all } \tau \in \Zz(G)\},\\
G(\alpha) &:=\{\sigma \in G: \alpha(\sigma,\tau)=\alpha(\tau,\sigma), \text{for all } \tau \in \Zz(G)(\alpha) \}.
\end{align*}
It is not difficult to see that the groups $\Zz(G)$ and $G(\alpha)$ only depend
on the cohomology class of $\alpha$.

\begin{example}
Let $q$ be a primitive $n$-th root of unity. We can define a 2-cocycle over $A=\mathbb{Z}\oplus \mathbb{Z}$, by $\alpha(a_1\oplus a_2,b_1\oplus b_2)= q^{a_1b_2}$, then $A(\alpha)=n\mathbb{Z}\oplus n\mathbb{Z}$.
\end{example}
Given a discrete group $G$ and a 2-cocycle $\alpha\in Z^2(G,\mathbb{C}^*)$, we will denote by $\mathbb{C}_\alpha[G]$ the  group algebra twisted by $\alpha$, that is, $\mathbb{C}_\alpha[G]$ is the vector space with basis $\{u_\sigma\}_{\sigma\in G}$ and product $u_\sigma u_\tau=\alpha(\sigma,\tau)u_{\sigma\tau}$, for all $\sigma, \tau \in G$. The category of left $\mathbb{C}_\alpha[G]$-modules is canonically  isomorphic to the category of $\alpha$-projective representation of $G$ and will be denoted by $\Rep(\mathbb{C}_\alpha[G])$.

Let $G$ be any group, $N\trianglelefteq G$ be a normal subgroup and $\pi:
G\times G\to G/N\times G/N$ the canonical projection. For any $\omega\in
Z^2(G/N,U(1))$, Inf$(\omega):=\omega\circ \pi \in Z^2(G,U(1))$. The map Inf is
called the inflation map and it defines a group homomorphism Inf$:
H^2(G/N,U(1))\to H^2(G,U(1))$. Note that the natural projection
$\Pi:\C_{\Inf(\omega)}[G]\to \C_{\omega}[G/N]$ is an algebra epimorphism, so
it defines a faithful functor $$\Inf(-):=\Pi^*(-):
\text{Rep}(\C_{\omega}[G/N])\to \text{Rep}(\C_{\Inf(\omega)}[G]).$$

The following theorem is a generalization of the main results of \cite{B}.
\begin{theorem}\label{irreducible projective repr}
Let $G$ be a finitely generated group such that $G/\Zz(G)$ is finite and $\alpha
\in Z^2(G,U(1))$ be a 2-cocycle. Then

\begin{enumerate}
\item $G$ has a finite dimensional
irreducible $\alpha$-projective $G$-representation if and only if $\Zz(G)(\alpha)$ has finite index.

\item If $\Zz(G)(\alpha)$ has finite index, then there is a  2-cocycle
$$\Omega\in Z^2(G(\alpha)/\Zz(G)(\alpha),U(1))$$ such that $\alpha|_{G(\alpha)}$
is cohomologous to Inf$(\Omega)$.

\item Assume that $\alpha|_{G(\alpha)}=$Inf$(\Omega)$ and that
$\{U_1,\ldots,U_l\}$ is a representative set of the irreducible
$\Omega$-representations of $G(\alpha)/\Zz(G)(\alpha)$ (since
$G(\alpha)/\Zz(G)(\alpha)$ is finite, there are only finitely many isomorphism
classes). Then all irreducible finite dimensional representations of $G$ are of
the form $$Ind_{G(\alpha)}^G(\theta\otimes \Inf(U_i)),$$ where
$\theta:G(\alpha)\to U(1)$ is a character.

\item A pair of  representations Ind$_{G(\alpha)}^G(\theta\otimes \Inf(U_i))$ and Ind$_{G(\alpha)}^G(\theta'\otimes \Inf(U_j))$ are isomorphic if and only if $i=j$ and $\theta|_{\Zz(G)(\alpha)}=\theta'_{\Zz(G)(\alpha)}$.

\end{enumerate}
\end{theorem}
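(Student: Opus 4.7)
The proof plan is a Mackey--Clifford classification using the abelian central subgroup $\Zz(G)$, mediated by the intermediate subgroups $\Zz(G)(\alpha)\subseteq G(\alpha)$ forced by the projective cocycle $\alpha$. The finiteness of $G/\Zz(G)$ reduces the delicate infinite-group issues to essentially finite-group analysis once part (1) establishes that the quotient $G(\alpha)/\Zz(G)(\alpha)$ is finite.

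For (1), I would examine the twisted group algebra $\C_\alpha[\Zz(G)]$. Because $\Zz(G)(\alpha)$ is abelian and $\alpha|_{\Zz(G)(\alpha)\times\Zz(G)(\alpha)}$ is symmetric by definition, it is cohomologous to a trivial cocycle; after this coboundary, $\C[\Zz(G)(\alpha)]$ is the center of $\C_\alpha[\Zz(G)]$, and the induced cocycle on the quotient $\Zz(G)/\Zz(G)(\alpha)$ has trivial radical. The Stone--von Neumann classification for symplectic abelian groups then parametrizes irreducible $\C_\alpha[\Zz(G)]$-modules by characters of $\Zz(G)(\alpha)$, each of dimension $[\Zz(G):\Zz(G)(\alpha)]^{1/2}$; finite-dimensional irreducibles therefore exist if and only if $[\Zz(G):\Zz(G)(\alpha)]<\infty$. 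If $\rho$ is a finite-dimensional irreducible $\alpha$-representation of $G$, every irreducible summand of $\rho|_{\Zz(G)}$ is such a module and must be finite-dimensional, giving the ``only if'' direction; the ``if'' direction follows from the explicit construction in (3).

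For (2), trivialize $\alpha|_{\Zz(G)(\alpha)}$ by a $1$-cochain $t:\Zz(G)(\alpha)\to U(1)$. The defining symmetry $\alpha(\sigma,z)=\alpha(z,\sigma)$ for $\sigma\in G(\alpha)$ and $z\in\Zz(G)(\alpha)$, together with the $2$-cocycle identity, lets one extend $t$ to a $1$-cochain on $G(\alpha)$ whose coboundary makes the twisted cocycle depend only on cosets modulo $\Zz(G)(\alpha)$. This produces the desired $\Omega$ with $\alpha|_{G(\alpha)}\sim\Inf(\Omega)$. The quotient $G(\alpha)/\Zz(G)(\alpha)$ is finite because $\Zz(G)\subseteq G(\alpha)$, $[G:\Zz(G)]<\infty$, and $[\Zz(G):\Zz(G)(\alpha)]<\infty$ by (1).

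For (3) and (4), assume $\alpha|_{G(\alpha)}=\Inf(\Omega)$. Given an irreducible finite-dimensional $\alpha$-representation $\rho$ of $G$ on $V$, pick an irreducible $G(\alpha)$-subrepresentation $W\subseteq V$. Schur's lemma applied to $\Zz(G)(\alpha)$ (central in $G(\alpha)$, with trivial cocycle) yields a central character $\theta_0:\Zz(G)(\alpha)\to U(1)$. Using the central-extension structure of $G$ by the torsion-free abelian group $A$ from Section~\ref{description central extensions}, extend $\theta_0$ to a character $\theta:G(\alpha)\to U(1)$; then $W\otimes\theta^{-1}$ descends to one of the finitely many irreducible $\Omega$-representations $U_i$ of the finite group $G(\alpha)/\Zz(G)(\alpha)$. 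The eigenspace analysis from (1) shows that distinct $G$-translates $\rho(g)W$ correspond to distinct characters of $\Zz(G)(\alpha)$ indexed by cosets of $G(\alpha)$ in $G$, so $V\cong\mathrm{Ind}_{G(\alpha)}^G(\theta\otimes\Inf(U_i))$. For (4), apply Mackey's equivalence criterion: since $\Zz(G)(\alpha)$ is central in $G$, the character $\theta|_{\Zz(G)(\alpha)}$ is a $G$-invariant of the induced representation, forcing $\theta|_{\Zz(G)(\alpha)}=\theta'|_{\Zz(G)(\alpha)}$; the remaining data is then the finite-group irreducible, forcing $i=j$.

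The main obstacle is the extension of the central character $\theta_0$ to a genuine character $\theta$ of $G(\alpha)$, not merely a $1$-cochain, which requires exploiting the specific structure of $G$ as a central extension of a finite group by a torsion-free abelian group developed in Section~\ref{description central extensions}. A related subtlety is the non-normality of $G(\alpha)$ in $G$ (although it has finite index, containing $\Zz(G)$), which forces Mackey's equivalence criterion to be applied via the $G$-invariant central character on $\Zz(G)(\alpha)$ rather than via normal-subgroup Clifford theory.
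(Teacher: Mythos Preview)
Your overall Mackey--Clifford strategy matches the paper's: restrict to the central subgroup $\Zz(G)(\alpha)$, identify $G(\alpha)$ as the stability subgroup, and induce. For part (1) the paper simply cites \cite[Lemma 2]{B}, which is precisely your Stone--von Neumann argument for the finitely generated abelian group $\Zz(G)$; for parts (2)--(4) the paper invokes \cite[Theorem 8.2]{Mackey} twice, first for $\Zz(G)(\alpha)\subset G$ and then for $\Zz(G)(\alpha)\subset G(\alpha)$.

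Two points where you diverge from the paper deserve comment.

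First, your closing worry about the ``non-normality of $G(\alpha)$ in $G$'' is misplaced: $G(\alpha)$ \emph{is} normal. The paper observes that the little $\alpha$-group in $G$ of \emph{every} character of $\Zz(G)(\alpha)$ equals $G(\alpha)$ and depends only on $\alpha$, not on the character; since little groups along a $G$-orbit of characters are mutual conjugates and here they all coincide, $G(\alpha)$ is normal. (Equivalently, $g\mapsto \alpha(g,\cdot)\alpha(\cdot,g)^{-1}$ is a homomorphism from $G$ to $\Hom(\Zz(G)(\alpha),U(1))$ with kernel $G(\alpha)$.) The paper then uses normality directly to conclude that $\mathrm{Ind}_{G(\alpha)}^G$ gives a bijection on isomorphism classes of irreducibles, so no appeal to Mackey's intertwining criterion via the central character is needed for (4).

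Second, the paper sidesteps what you flag as the ``main obstacle''---extending $\theta_0$ from $\Zz(G)(\alpha)$ to a genuine character $\theta$ of $G(\alpha)$---by reversing the logic. The second application of Mackey's theorem, to the inclusion $\Zz(G)(\alpha)\subset G(\alpha)$, produces \emph{simultaneously} the obstruction cocycle $\Omega$ and a $1$-cochain $\theta$ on $G(\alpha)$ satisfying $\delta(\theta)=\alpha|_{G(\alpha)}\cdot\Inf(\Omega)^{-1}$. This already gives (2). One then \emph{replaces} $\alpha|_{G(\alpha)}$ by the cohomologous cocycle $\Inf(\Omega)$, whereupon $\delta(\theta)=1$ and $\theta$ is a character automatically. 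No separate extension argument is needed, and in particular the torsion-free decomposition of Section~\ref{description central extensions} is not invoked in the paper's proof of this theorem.
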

\begin{proof}
Since $G$ is finitely generated, $\Zz(G)$ is finitely generated, so every finite
dimensional unitary projective representation of $\Zz(G)$ is the  direct sum of
its irreducible subrepresentations.
Let $V$ be a finite dimensional irreducible $\alpha$-represen\-tation of $G$. Then $Res^G_{\Zz(G)}(V)$ is a finite dimensional unitary $\alpha$-representation of $\Zz(G)$, so by \cite[Lemma 2]{B},
$[\Zz(G):\Zz(G)(\alpha)]$ is finite and since $[G:\Zz(G)]$ is finite,  $[G:\Zz(G)(\alpha)]$ is finite.

We will apply Mackey's theory \cite{Mackey} to the finite index central subgroup $\Zz(G)(\alpha)$ of $G$.
Since $\alpha$ is symmetric on $\Zz(G)(\alpha)$, its cohomological class is trivial and we may
replace $\alpha$ by a 2-cocycle such that  $\alpha|_{\Zz(G)(\alpha)}\equiv 1$.
Let $V$ be a unitary finite dimensional $\alpha$-projective repre\-sentation of $G$. Since $\alpha|_{\Zz(G)(\alpha)}\equiv 1$, the restriction of $V$ to   $\Zz(G)(\alpha)$ decomposes as a direct sum of one dimensional linear representation of $\Zz(G)$. It is easy to check that the little $\alpha$-group (i.e., stability group) of $G$ of any of these one dimensional $\Zz(G)$ representations is equal to $G(\alpha)$ and only depends on $\alpha$.  In particular $G(\alpha)$ is normal. By \cite[Theorem 8.2]{Mackey} it follows that there exists an $\alpha$-projective irreducible unitary representation $W$ such that $V=\text{Ind}_{G(\alpha)}^G(W)$.  Since $G(\alpha)$ is normal,  $V=\text{Ind}_{G(\alpha)}^G(W)$ is isomorphic to $V'=\text{Ind}_{G(\alpha)}^G(W')$ if and only if $W$ is isomorphic to $W'$ as $\alpha$-projective $G(\alpha)$-representations. Then the induction functor  $\text{Ind}_{G(\alpha)}^G(-)$ defines an equivalence between the category of finite dimensional unitary $\alpha$-projective re\-presentation
of
$G(\alpha)$ and the category of finite dimensional unitary $\alpha$-projective of
$G$.

As in the proof of the main result of \cite{B}, we will apply  Mackey's theory again, this time to the central subgroup $\Zz(G)(\alpha)$ of $G(\alpha)$. Let $\chi:\Zz(G)(\alpha)\to U(1)$ be a one dimensional representation of $\Zz(G)$ and recall that the little $\alpha$-group of $G(\alpha)$ at $\chi$ is $G(\alpha)$ itself. By \cite[Theorem 8.2]{Mackey} there exists a 2-cocycle $\Omega\in Z^2(G(\alpha)/\Zz(G)(\alpha))$ and a 1-cochain $\theta \in C^1(G(\alpha),U(1))$ such that
\begin{equation}\label{extension}
\delta(\theta)=\alpha\Inf(\Omega)^{-1}
\end{equation}
Then the cohomology class of $\alpha$ and Inf$(\Omega)$ are the same, so we can assume that $\alpha=$Inf$(\Omega)$. Now, since $\alpha=$Inf$(\Omega)$, equation \eqref{extension} implies that $\theta$ is a character, so again by  \cite{Mackey} (see also \cite[Theorem 6.4.2]{karpi}) the theorem follows.
\end{proof}

\begin{cor}
Let $G$ be a finitely generated group such that $G/\Zz(G)$ is finite. Then

\begin{enumerate}

\item There is a  2-cocycle $\Omega\in Z^2(G/\Zz(G),U(1))$ such that
Inf$(\Omega)$ has trivial cohomology  and if
$\{U_1,\ldots,U_l\}$ is a representative set of the irreducible
$\Omega$-representations of $G/\Zz(G)$, then all irreducible finite dimensional
representations of $G$ are of the form $$\theta\otimes \Inf(U_i),$$ where
$\theta:G\to U(1)$ is a character.

\item A pair of  representations  $\theta\otimes Inf(U_i)$ and $\theta'\otimes Inf(U_j)$ are isomorphic if and only if $i=j$ and $\theta|_{\Zz(G)}=\theta'_{\Zz(G)}$.

\end{enumerate}

\end{cor}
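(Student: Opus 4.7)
The plan is to specialize Theorem \ref{irreducible projective repr} to the trivial 2-cocycle $\alpha \equiv 1$. First I would observe that with this choice the auxiliary subgroups collapse: the defining symmetry condition $\alpha(\sigma,\tau)=\alpha(\tau,\sigma)$ is automatic, so $\Zz(G)(\alpha)=\Zz(G)$ and $G(\alpha)=G$. The cocycle is already standard since $1(\sigma,\sigma^{-1})=1$, so no normalization is needed. By hypothesis $G/\Zz(G)$ is finite, hence $\Zz(G)(\alpha)=\Zz(G)$ has finite index in $G$, and the standing hypothesis of the theorem is satisfied.

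Next I would invoke part (2) of Theorem \ref{irreducible projective repr} to extract a 2-cocycle $\Omega \in Z^2(G(\alpha)/\Zz(G)(\alpha),U(1))=Z^2(G/\Zz(G),U(1))$ such that $\alpha|_{G(\alpha)}=1$ is cohomologous to $\Inf(\Omega)$. This is exactly the assertion that $\Inf(\Omega)$ has trivial cohomology class, proving the first half of item (1). For the classification, apply part (3) of the theorem: all irreducible finite dimensional representations of $G$ have the form $\mathrm{Ind}_{G(\alpha)}^G(\theta\otimes\Inf(U_i))$ with $U_i$ running through the finitely many irreducible $\Omega$-projective representations of the finite group $G/\Zz(G)$ and $\theta:G(\alpha)\to U(1)$ a character. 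Since $G(\alpha)=G$, the induction functor is the identity, giving precisely $\theta\otimes\Inf(U_i)$ as claimed.

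For item (2), I would read off the isomorphism criterion from part (4) of Theorem \ref{irreducible projective repr}: the induced representations are isomorphic iff $i=j$ and $\theta|_{\Zz(G)(\alpha)}=\theta'|_{\Zz(G)(\alpha)}$, which under our specialization $\Zz(G)(\alpha)=\Zz(G)$ becomes the stated condition. There is no serious obstacle in this corollary; the content is entirely in the theorem, and what remains is the elementary verification that $\alpha=1$ degenerates the subgroups $\Zz(G)(\alpha)$ and $G(\alpha)$ to $\Zz(G)$ and $G$ respectively, so that induction in part (3) becomes trivial.
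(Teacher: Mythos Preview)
Your proposal is correct and matches the paper's intended argument: the paper simply writes \qed after the corollary, indicating it is the immediate specialization of Theorem~\ref{irreducible projective repr} to the trivial cocycle $\alpha\equiv 1$, which is precisely the reduction you carry out. Your explicit verification that $\Zz(G)(\alpha)=\Zz(G)$ and $G(\alpha)=G$ under $\alpha\equiv 1$, collapsing the induction in part (3) to the identity, is exactly the content hidden behind that \qed.
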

\qed


\begin{theorem}\label{cor M(chi)}
Let $G$ be a finitely generated group such that $G/\Zz(G)$ is finite and $\alpha
\in Z^2(G,U(1))$ a 2-cocycle. Then every finite dimensional unitary
$\alpha$-projective $G$-representation is a subrepresentation of a finite
dimensional monomial $\alpha$-projective $G$-representation.
\end{theorem}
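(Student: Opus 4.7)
The plan is to reduce the problem to the monomiality of twisted group algebras of finite groups, leveraging the classification of irreducibles in Theorem \ref{irreducible projective repr}.

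First, since the given representation is unitary and finite-dimensional, it decomposes as an orthogonal direct sum of irreducible $\alpha$-projective subrepresentations, so it suffices to treat a single irreducible summand $V$. By Theorem \ref{irreducible projective repr}(3) we may write $V \cong \mathrm{Ind}_{G(\alpha)}^G(\theta \ot \Inf(U_i))$, where $\theta:G(\alpha)\to U(1)$ is a character and $U_i$ is an irreducible $\Omega$-projective representation of the \emph{finite} group $H := G(\alpha)/\Zz(G)(\alpha)$.

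Next, I would invoke the classical fact that the twisted group algebra $\C_\Omega[H]$, with its standard basis $\{u_h\}_{h\in H}$ and left action $u_{h'}\cdot u_h = \Omega(h',h)\,u_{h'h}$, is a monomial $\Omega$-projective representation of $H$ that contains every irreducible $\Omega$-projective representation as a subrepresentation; in particular $U_i \hookrightarrow \C_\Omega[H]$. The inflation functor carries a monomial basis of an $\Omega$-representation of $H$ to a monomial basis of an $\Inf(\Omega)$-representation of $G(\alpha)$, since the action of $g\in G(\alpha)$ factors through its image in $H$; tensoring with the one-dimensional character $\theta$ only rescales the diagonal entries and preserves monomiality. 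Using the normalization $\alpha|_{G(\alpha)} = \Inf(\Omega)$ arranged in Theorem \ref{irreducible projective repr}(2), we obtain an embedding $\theta\ot\Inf(U_i) \hookrightarrow \theta\ot\Inf(\C_\Omega[H])$ of monomial $\alpha$-projective representations of $G(\alpha)$.

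Finally, I would induce up to $G$. Exactness of $\mathrm{Ind}_{G(\alpha)}^G$ yields an embedding
\[
V \hookrightarrow \mathrm{Ind}_{G(\alpha)}^G\bigl(\theta\ot\Inf(\C_\Omega[H])\bigr).
\]
Picking coset representatives $\{x_j\}$ for $G/G(\alpha)$, the family $\{x_j\ot u_h\}_{j,h}$ is a monomial basis for the induced representation, so the target is a monomial $\alpha$-projective $G$-representation. Finite-dimensionality follows from $[G:G(\alpha)] \leq [G:\Zz(G)(\alpha)] < \infty$, which is established at the start of the proof of Theorem \ref{irreducible projective repr}. For a reducible input, we simply take the direct sum of the monomial envelopes of each irreducible constituent. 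The main obstacle is the bookkeeping of the 2-cocycles across inflation, tensoring with $\theta$, and induction, ensuring that we produce an $\alpha$-projective representation \emph{on the nose} rather than merely up to cohomology; this is precisely the role of the normalization $\alpha|_{G(\alpha)} = \Inf(\Omega)$ from Theorem \ref{irreducible projective repr}(2).
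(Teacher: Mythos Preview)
Your proposal is correct and follows essentially the same approach as the paper: reduce to an irreducible $V$, invoke Theorem \ref{irreducible projective repr} to write $V\cong \mathrm{Ind}_{G(\alpha)}^G(\theta\ot\Inf(U_i))$, and then embed into the monomial representation $M(\theta)=\mathrm{Ind}_{G(\alpha)}^G(\theta\ot\Inf(\C_\Omega[G(\alpha)/\Zz(G)(\alpha)]))$. The paper states this more tersely, while you spell out the preservation of monomiality under inflation, tensoring by $\theta$, and induction, as well as the finite-index argument; these are exactly the details the paper leaves implicit.
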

\begin{proof}
Since every  finite dimensional unitary $\alpha$-projective
$G$-representation is completely reducible, it is enough to proof
the corollary for a finite dimensional irreducible unitary
representation. By Theorem \ref{irreducible projective repr}(1)(2),
we have that $G(\alpha)/\Zz(G)(\alpha)$ is a
finite group  and we can assume that $\alpha|_{G(\alpha)}=
\Inf(\Omega)$ for some $$\Omega\in
Z^2(G(\alpha)/\Zz(G)(\alpha),U(1)).$$

By Theorem \ref{irreducible projective repr}(3), if  $V$ is
an irreducible finite dimensional unitary $\alpha$-representa\-tion, then
$V\cong \text{Ind}_{G(\alpha)}^G(\theta\otimes \Inf(U_i))$,
where $U_i$ is an irreducible $\Omega$-representation of
$G(\alpha)/\Zz(G)$. We can construct the finite dimensional
monomial $\alpha$-projective representation
$M(\theta):=\text{Ind}_{G(\alpha)}^G(\theta\otimes \Inf(\mathbb
C_{\Omega}[G(\alpha)/\Zz(G)(\alpha)]))$ and $V$
is a subrepresentation of $M(\theta)$.
\end{proof}

\subsubsection{Classification of Twisted $\YD$-modules}

\begin{defn}

\begin{itemize}
\item Let $G$ be a group, $V$ be a vector space and $\pi: G\to
\text{GL}(V)$ a map. A \textbf{monomial structure} on the map $\pi$
is a decomposition $V=\bigoplus_{x\in X}V_x$ into subspaces of
dimension one, such that $\pi(\sigma)$ permutes the $V_x$ for all
$x\in X$ and $\sigma \in G$.

\item A \textbf{monomial object} in $\YD_G^\omega$ consist of a twisted
$\YD$-module, with a monomial structure on the twisted
$G$-action.

\item A \textbf{monomial projective representation} is a projective
representation $\pi: G\to \text{GL}(V)$ with a monomial structure.

\end{itemize}
\end{defn}


Fix $\chi\in Hom(H; U(1)^{\times r})$ and let $G := A\rtimes_\chi
H$ the associated central extension (see Subsection
\ref{description central extensions}). For any subset $S\subset H$
we will denote  the subset $\{(a,s): a\in A, s\in S\}$ of $G$ by $A \rtimes_\chi S$.

\begin{lemma}
\begin{itemize}
\item For all $(a,\sigma)\in G$, $C_G((a,\sigma))=A\rtimes_\chi
C_H(\sigma)$, in particular $\Zz(G)=A\rtimes_\chi \Zz(H)$. \item If $T$
is a representative set of conjugacy classes of $H$, then
$A\rtimes_\chi T$ is a representative set of the conjugacy classes of
$G$ and for all $(a,\sigma)$,
$[G:C_G((a,\sigma))]=[H:C_H(\sigma)]$.
\end{itemize}
\end{lemma}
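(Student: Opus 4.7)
The group $G=A\rtimes_\chi H$ has underlying set $A\times H$ with multiplication $(a,\sigma)(b,\tau)=(a+b+f_\chi(\sigma,\tau),\sigma\tau)$, where $f_\chi$ is symmetric by construction. From the formula (after normalizing $f_\chi$ on the identity) it is immediate that every element of the form $(a,e)$ commutes with all of $G$, so $A$ is central, and in particular conjugation by $(a,\sigma)$ agrees with conjugation by $(0,\sigma)$. Using $(0,\sigma)^{-1}=(-f_\chi(\sigma,\sigma^{-1}),\sigma^{-1})$, a direct computation gives
\[
(0,\sigma)(b,\tau)(0,\sigma)^{-1}=\bigl(b+\Delta(\sigma,\tau),\ \sigma\tau\sigma^{-1}\bigr),
\]
where $\Delta(\sigma,\tau):=f_\chi(\sigma,\tau)+f_\chi(\sigma\tau,\sigma^{-1})-f_\chi(\sigma,\sigma^{-1})$. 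The projection $\pi: G\to H$, $(a,\sigma)\mapsto\sigma$, is a surjective group homomorphism with kernel $A$, and this will be our main bookkeeping device.

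For the first bullet, the inclusion $C_G((a,\sigma))\subset\pi^{-1}(C_H(\sigma))=A\rtimes_\chi C_H(\sigma)$ is immediate from applying $\pi$. For the reverse inclusion I must show $\Delta(\eta,\sigma)=0$ whenever $\eta\in C_H(\sigma)$. Using $\eta\sigma=\sigma\eta$ and the 2-cocycle identity applied to the triple $(\eta,\sigma,\sigma^{-1})$, one obtains
\[
f_\chi(\eta,\sigma)+f_\chi(\eta\sigma,\sigma^{-1})=f_\chi(\eta,e)+f_\chi(\sigma,\sigma^{-1})=f_\chi(\sigma,\sigma^{-1}),
\]
so $\Delta(\eta,\sigma)=f_\chi(\sigma,\eta)-f_\chi(\eta,\sigma)$, which vanishes by symmetry of $f_\chi$. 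This yields $C_G((a,\sigma))=A\rtimes_\chi C_H(\sigma)$. Intersecting over all $(a,\sigma)\in G$, and noting that $\pi$ is surjective, gives $\mathcal{Z}(G)=\bigcap_\sigma A\rtimes_\chi C_H(\sigma)=A\rtimes_\chi \mathcal{Z}(H)$.

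For the second bullet, given any $(b,\tau)\in G$, pick $\eta\in H$ with $\eta\tau\eta^{-1}\in T$; then $(0,\eta)(b,\tau)(0,\eta)^{-1}=(b+\Delta(\eta,\tau),\eta\tau\eta^{-1})\in A\rtimes_\chi T$, so $A\rtimes_\chi T$ meets every conjugacy class. Conversely, suppose $(b,t)$ and $(b',t')$ with $t,t'\in T$ are $G$-conjugate. Applying $\pi$ shows $t$ and $t'$ are $H$-conjugate, hence $t=t'$ since $T$ is a set of representatives. Any element $(a,\eta)$ conjugating $(b,t)$ to $(b',t)$ must then satisfy $\eta t\eta^{-1}=t$, i.e., $\eta\in C_H(t)$, and by the centralizer computation above $\Delta(\eta,t)=0$, so $b'=b+\Delta(\eta,t)=b$. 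Finally, the index formula $[G:C_G((a,\sigma))]=[H:C_H(\sigma)]$ follows because $C_G((a,\sigma))=\pi^{-1}(C_H(\sigma))$, so $\pi$ induces a bijection of coset spaces $G/C_G((a,\sigma))\longleftrightarrow H/C_H(\sigma)$.

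The only nontrivial point is the vanishing of $\Delta(\eta,\sigma)$ on centralizers, which reduces by a single application of the cocycle identity to the symmetry of $f_\chi$; everything else is a bookkeeping exercise with the projection $\pi:G\to H$.
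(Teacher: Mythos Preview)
Your argument is correct in outline, but you have taken a longer road than necessary and there is a small bookkeeping slip in the cocycle step.

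The paper's proof simply compares the two products directly: $(a,\sigma)(b,\tau)=(a+b+f_\chi(\sigma,\tau),\sigma\tau)$ and $(b,\tau)(a,\sigma)=(a+b+f_\chi(\tau,\sigma),\tau\sigma)$, so the two elements commute iff $\sigma\tau=\tau\sigma$ and $f_\chi(\sigma,\tau)=f_\chi(\tau,\sigma)$; symmetry of $f_\chi$ finishes it in one line. You instead pass through the conjugation formula, compute an inverse, introduce the defect $\Delta$, and then have to kill $\Delta$ using the cocycle identity. This works, and the projection $\pi$ is a clean way to package the second bullet, but for the centralizer statement it is strictly more effort than comparing the two products.

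About the slip: with your definition $\Delta(\eta,\sigma)=f_\chi(\eta,\sigma)+f_\chi(\eta\sigma,\eta^{-1})-f_\chi(\eta,\eta^{-1})$, the relevant term is $f_\chi(\eta\sigma,\eta^{-1})$, not $f_\chi(\eta\sigma,\sigma^{-1})$. The cocycle identity on $(\eta,\sigma,\sigma^{-1})$ that you wrote down is true but does not touch that term. What you want is the identity on $(\sigma,\eta,\eta^{-1})$ together with $\eta\sigma=\sigma\eta$, which gives $f_\chi(\sigma\eta,\eta^{-1})=f_\chi(\eta,\eta^{-1})-f_\chi(\sigma,\eta)$ and hence $\Delta(\eta,\sigma)=f_\chi(\eta,\sigma)-f_\chi(\sigma,\eta)$, vanishing by symmetry. (Alternatively, your displayed identity \emph{does} compute $\Delta(\sigma,\eta)$ rather than $\Delta(\eta,\sigma)$; either vanishes and either suffices, since commuting is symmetric.) The conclusion is unaffected, but as written the implication ``so $\Delta(\eta,\sigma)=f_\chi(\sigma,\eta)-f_\chi(\eta,\sigma)$'' does not follow from the line above it.
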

\begin{proof}
It is easy to see that $(a,\sigma)$ commutes with $(b,\tau)$ if
and only if $\sigma\tau=\tau\sigma$ and
$f_\chi(\sigma,\tau)=f_\chi(\tau,\sigma)$, but by construction
$f_\chi$ is symmetric so $(b,\tau)\in C_G((a,\sigma))$ if and
only if $\tau\in C_H(\sigma)$. The second statement is a consequence of the
first.
\end{proof}

For each conjugacy class $R$ we denote by $\YD_G^\omega(R)$ the full abelian subcategory of $\YD_G^\omega$ consisting of the zero object and all twisted $\YD$-modules with support $R$.
We will also denote by $(a,\sigma)^G$ the conjugacy class of $(a,\sigma)$ in $G$, that is, $(a,\sigma)^G=\{(a,\tau): \tau \in \sigma^H\}$.

\begin{prop}\label{prop desc YD}
Let $T$ be a representative set of conjugacy classes of $H$.
\begin{enumerate}
\item $\YD_G^\omega=\bigoplus_{(a,\sigma )\in A\rtimes_\chi T}\YD_G^\omega((a,\sigma)^G)$.

\item The abelian categories $\YD_G^\omega((a,\sigma)^G)$ and $\Rep(\mathbb C_{\gamma_{-,-}(\sigma)}[A\rtimes_\chi C_H(\sigma)])$ are equivalent.
\item The finite dimensional irreducible unitary twisted $\YD$-modules are in one-to-one correspondence
with triples $(\sigma,\theta,U)$, where $\sigma\in T$, $U$ is a finite dimensional unitary
irreducible projective representation of $$A\rtimes_\chi
C_H(\sigma)(\gamma_{-,-}(\sigma))/\Zz (A\rtimes_\chi
C_H(\sigma))(\gamma_{-,-}(\sigma))$$ and  $\theta:A\rtimes_\chi
C_H(\sigma)(\gamma_{-,-}(\sigma))\to U(1)$ is a character.
\end{enumerate}
\end{prop}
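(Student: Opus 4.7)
The plan is to prove the three parts in order, with the first following formally from the definition and the second and third being the twisted analogues of classical results.

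For part~(1), the grading condition $\sigma V_\tau \subset V_{\sigma\tau\sigma^{-1}}$ forces the support of every object of $\YD_G^\omega$ to be closed under $G$-conjugation, hence a union of $G$-conjugacy classes. Setting $V(R):=\bigoplus_{g\in R}V_g$ for each conjugacy class $R$ yields a decomposition $V=\bigoplus_R V(R)$ in $\YD_G^\omega$ with no morphisms between distinct blocks, and by the preceding lemma the conjugacy classes of $G$ are represented by $(a,\sigma)\in A\rtimes_\chi T$.

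For part~(2), I would construct an equivalence
$F:\YD_G^\omega((a,\sigma)^G)\to \Rep(\C_{\gamma_{-,-}(\sigma)}[A\rtimes_\chi C_H(\sigma)])$ by $F(V):=V_{(a,\sigma)}$. Since $C_G((a,\sigma))=A\rtimes_\chi C_H(\sigma)$, the subspace $V_{(a,\sigma)}$ is stable under the twisted action of this centralizer, and the twisted-associativity identity $(\tau_1\tau_2)v=\gamma_{\tau_1,\tau_2}((a,\sigma))\,\tau_1(\tau_2 v)$ for $v\in V_{(a,\sigma)}$ shows that the action is $\gamma_{-,-}(\sigma)$-projective, i.e.\ a module over the twisted group algebra. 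A quasi-inverse is provided by induction: a module $W$ over $\C_{\gamma_{-,-}(\sigma)}[A\rtimes_\chi C_H(\sigma)]$ extends to $V:=\mathrm{Ind}_{C_G((a,\sigma))}^G(W)$ with $G$-grading placing the coset $gC_G((a,\sigma))$ in degree $g(a,\sigma)g^{-1}$ and the induced twisted $G$-action. Checking that $F$ and this induction are mutually quasi-inverse is cocycle bookkeeping.

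For part~(3), I apply Theorem \ref{irreducible projective repr} to $G':=A\rtimes_\chi C_H(\sigma)$ with $\alpha:=\gamma_{-,-}(\sigma)$. Since $\Zz(G)\subset G'$ and $[G:\Zz(G)]<\infty$, the subgroup $G'$ is finitely generated and $G'/\Zz(G')$ is finite, so the theorem classifies the irreducible finite-dimensional unitary $\alpha$-projective representations of $G'$ as $\mathrm{Ind}^{G'}_{G'(\alpha)}(\theta\otimes\Inf(U_i))$. Combining with~(2) shows that the simple objects of $\YD_G^\omega((a,\sigma)^G)$ are in bijection with pairs $(\theta,U)$, and running over the representatives in $T$ gives the triples $(\sigma,\theta,U)$, with the $A$-component of the class representative recorded by the restriction of $\theta$ to $A$.

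The main obstacle is the cocycle bookkeeping in~(2): one must use the explicit formulas for $\gamma$ and $\mu$, together with the 3-cocycle identity on $\omega$, to verify that $\gamma_{-,-}(\sigma)$ really is a 2-cocycle on the centralizer, that the induced twisted $G$-action on the proposed quasi-inverse satisfies the $\omega$-twisted associativity, and that the two functors are naturally isomorphic to the respective identities.
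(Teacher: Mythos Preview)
Your proposal is correct and follows essentially the same approach as the paper: the paper's proof simply asserts that (1) and (2) are restatements of the Dijkgraaf--Pasquier--Roche description of irreducible $D^\omega G$-modules (citing \cite{DPR}), and that (3) follows from Theorem~\ref{irreducible projective repr}. Your argument for (2) is exactly the DPR equivalence spelled out (restriction to the homogeneous component at a chosen representative, with induction as quasi-inverse), and your (3) is the same application of Theorem~\ref{irreducible projective repr}; if anything, your version is more honest, since the cited DPR result is stated for finite $G$ while here $G$ is infinite with finite conjugacy classes, so the explicit verification you sketch is what is actually needed.
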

\begin{proof}
Statements (1) and (2) are simply restatements, for $G=A\rtimes_\chi H$, of the description in \cite{DPR} of irreducible representations of $D^\omega G$ for $G$ finite.  Statement (3) follows from Theorem \ref{irreducible projective repr}.
\end{proof}

\begin{theorem}\label{Theo exist monomial quotient}
Every finite dimensional unitary twisted $\YD$-module over a group $G$ is a
subobject and a quotient of a monomial twisted $\YD$-module over some group.
\end{theorem}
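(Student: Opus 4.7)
The strategy is to reduce to the irreducible case, use the classification of Proposition \ref{prop desc YD} to transfer the problem to a projective representation of a centralizer subgroup, apply Theorem \ref{cor M(chi)} to embed that projective representation into a monomial one, and finally induce back up to $G$. For the quotient assertion, I will exploit unitarity: in the unitary $\YD_G^\omega$ setting, every subobject has a $G$-invariant complement (the orthogonal complement with respect to an invariant Hermitian form on the induced module), so any subobject is automatically a quotient via the orthogonal projection.

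First, since a finite direct sum of monomial objects is again monomial (concatenate the $X$'s) and unitary twisted $\YD$-modules are completely reducible, it suffices to prove the statement for an irreducible unitary $V$. By Proposition \ref{prop desc YD}(1)(2), such a $V$ lies in some block $\YD_G^\omega((a,\sigma)^G)$ and corresponds to an irreducible finite dimensional unitary $\alpha$-projective representation of the subgroup $K:=A\rtimes_\chi C_H(\sigma)$, where $\alpha=\gamma_{-,-}(\sigma)\in Z^2(K,U(1))$; concretely, $V \cong \mathrm{Ind}_K^G(W)$ for $W=\theta\otimes\Inf(U_i)$ as in Proposition \ref{prop desc YD}(3).

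Second, I would apply Theorem \ref{cor M(chi)} to the group $K$ and the 2-cocycle $\alpha$. Note that $K/\Zz(K)$ is finite: indeed $K$ is finitely generated and $\Zz(K)\supseteq A\rtimes_\chi \Zz(C_H(\sigma))$, whose index in $K$ divides $[H:\Zz(H)]$, which is finite by the lemma in Subsection \ref{description central extensions}. Theorem \ref{cor M(chi)} then produces a finite dimensional monomial $\alpha$-projective $K$-representation $M$ that contains $W$ as a subrepresentation (in the unitary sense an orthogonal direct summand of $M$).

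Finally, I induce: set $\widetilde M := \mathrm{Ind}_K^G(M)$. The equivalence of Proposition \ref{prop desc YD}(2) identifies $\alpha$-projective $K$-representations with objects of $\YD_G^\omega((a,\sigma)^G)$, and sends $W$ to $V$, so $\widetilde M$ is a twisted $\YD$-module over $G$ containing $V$. It remains to check that $\widetilde M$ is monomial: if $\{m_x\}_{x\in X}$ is a monomial basis for $M$ (so $K$ permutes the lines $\C m_x$ up to scalars) and $\{g_1,\ldots,g_r\}$ is a set of left coset representatives of $K$ in $G$, then $\{g_j\otimes m_x\}$ is a basis of $\widetilde M$ whose spanning lines are permuted (up to scalars) by the twisted $G$-action, giving the required decomposition indexed by the set $\{1,\ldots,r\}\times X$. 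Then $V \subseteq \widetilde M$ is the subobject claim, and by unitarity the orthogonal projection $\widetilde M\twoheadrightarrow V$ yields the quotient claim.

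The main technical obstacle I expect is bookkeeping the cocycle identifications: one must verify that the induction functor of Proposition \ref{prop desc YD}(2) genuinely transports the monomial structure on $M$ to a monomial structure on $\widetilde M$ compatible with both the $G$-grading and the $\gamma$-twisted $G$-action, i.e., that permutation of the lines $\C(g_j\otimes m_x)$ respects the twisted action defined by $\mu_\sigma(\tau,\rho)$ and the grading condition $\sigma V_\tau\subset V_{\sigma\tau\sigma^{-1}}$. Everything else is a routine application of Mackey-type machinery already packaged in Theorem \ref{cor M(chi)}.
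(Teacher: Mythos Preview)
Your proposal is correct and follows essentially the same route as the paper: reduce to irreducibles by semisimplicity, pass to the projective representation of the centralizer $K=A\rtimes_\chi C_H(\sigma)$ via Proposition~\ref{prop desc YD}, embed into a monomial $\alpha$-projective $K$-module using Theorem~\ref{cor M(chi)}, and then transport back (the paper refers to the DPR construction where you explicitly write $\mathrm{Ind}_K^G$). Your write-up is in fact more detailed---you verify that Theorem~\ref{cor M(chi)} applies to $K$ (since $A\subset\Zz(K)$ and $K/A\cong C_H(\sigma)$ is finite), spell out why the induced module is monomial, and make the quotient claim explicit via orthogonal projection; the paper leaves these points implicit.
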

\begin{proof}
Since the category of finite dimensional unitary twisted $\YD$-modules over a group is semisimple, it is enough to prove that every unitary irreducible twisted $\YD$-module is a subobject and a quotient of a monomial twisted $\YD$-module.

If $U$ is a monomial $\gamma_{-,-}(\sigma)$-projective representation of $A\rtimes_\chi C_H(\sigma)$, it follows by definition  that the associated twisted $\YD$-module in Proposition \ref{prop desc YD} (see \cite{DPR} for details of the construction)  is monomial. If $V$ is a finite dimensional unitary irreducible twisted $\YD$-module, it is supported by only one conjugacy class. Let  $U$ be the irreducible $\gamma_{-,-}(\sigma)$-projective representation of $A\rtimes_\chi C_H(\sigma)$ associated to $V$, it follows by Theorem \ref{cor M(chi)} that there exists a monomial $\gamma_{-,-}(\sigma)$-projective representation of $A\rtimes_\chi C_H(\sigma)$ such that $U$ is a subobject and its associated twisted $\YD$-module is monomial and contains $V$.

\end{proof}

\begin{theorem}
The image of the braid group representations associated to
any finite dimensional unitary twisted $\YD$-module over a group $G$ is a quotient
of a monomial representation of the braid group, defined via a
monomial twisted $\YD$-module.
\end{theorem}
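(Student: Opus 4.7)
The plan is to reduce this corollary to Theorem \ref{Theo exist monomial quotient}, which already supplies, for any finite-dimensional unitary twisted $\YD$-module $V$, an embedding $V \hookrightarrow M$ as twisted $\YD$-modules with $M$ monomial (over some possibly larger group). The task is to promote that categorical embedding to a statement at the level of the braid group representations $\rho^c$ and $\rho^{c_M}$.

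First I would invoke semisimplicity of the category of finite-dimensional unitary twisted $\YD$-modules to split the inclusion $V \hookrightarrow M$ into a direct sum decomposition $M \cong V \oplus V^\perp$. Tensoring $n$ copies and expanding, $V^{\otimes n}$ appears as a direct summand of $M^{\otimes n}$, with an explicit projection and inclusion that are morphisms in $\YD_G^\omega$.

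Next, because the braiding $c_{M,M}$ and the associator $a_{M,M,M}$ are natural in the braided category $\YD_G^\omega$, both restrict along the inclusion $V\hookrightarrow M$; in particular $c_{V,V}$ is the restriction of $c_{M,M}$ to $V^{\otimes 2}\subset M^{\otimes 2}$. Consequently the subspace $V^{\otimes n} \subset M^{\otimes n}$ is $\B_n$-invariant under $\rho^{c_M}$, and the restricted action coincides with $\rho^c$. Restriction to $V^{\otimes n}$ then yields a surjective homomorphism $\rho^{c_M}(\B_n) \twoheadrightarrow \rho^c(\B_n)$, exhibiting the image of $\rho^c$ as a quotient of the image of $\rho^{c_M}$.

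Finally I would verify that $\rho^{c_M}$ is genuinely a monomial braid group representation. Writing the monomial decomposition $M = \bigoplus_{x\in X} M_x$ into one-dimensional subspaces, one takes the product basis $\{e_{x_1}\otimes\cdots\otimes e_{x_n} : x_i\in X\}$ of $M^{\otimes n}$ (with respect to a fixed bracketing). On such basis vectors the braiding permutes the one-dimensional pieces $M_x\otimes M_y$ and scales by the twisted $G$-action coefficients $\mu_\sigma(\tau,\rho)$, while the associator contributes only diagonal scalars given by $\omega$-values on the $G$-grading triples. The main bookkeeping step, and the one I expect to require the most care, is precisely this: confirming that once the associator is absorbed into phases, each generator $\sigma_i$ acts as a monomial matrix in the chosen basis; after that, $\rho^{c_M}(\B_n)$ lies in the group of monomial matrices and the theorem follows.
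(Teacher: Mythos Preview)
Your proposal is correct and follows essentially the same approach as the paper: invoke Theorem~\ref{Theo exist monomial quotient} to realize $V$ as a subobject (equivalently, by semisimplicity, a summand) of a monomial twisted $\YD$-module $M$, then use naturality/functoriality of the braiding to see that restriction to $V^{\otimes n}$ gives a surjection $\rho^{c_M}(\B_n)\twoheadrightarrow\rho^c(\B_n)$. Your treatment is in fact more careful than the paper's two-sentence proof in one respect: you explicitly address why a monomial twisted $\YD$-module yields a monomial braid representation in the quasi setting, noting that the associator contributes only diagonal scalars $\omega(\sigma,\tau,\rho)$ on graded tensors while the braiding permutes the lines $M_x\otimes M_y$; the paper simply asserts this step.
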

\begin{proof}
Let $V$ be a finite dimensional unitary twisted $\YD$-module. By Theorem \ref{Theo exist monomial quotient} there exists a monomial twisted $\YD$-module $W$ such that $V$ is a quotient (or subobject) of $W$. By the functoriality of the braiding in $\YD^\omega_G$, the image of the representation of $\B_n$ is a quotient of the representation associated to $W$. Since $W$ is monomial, the associated representation of $\B_n$ is monomial.
\end{proof}

We can now easily prove:
\begin{cor}\label{qBVS v.a.}
The braid group image associated with any unitary quasi-BVS of group-type is virtually abelian.
\end{cor}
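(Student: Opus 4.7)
The plan is to deduce the corollary directly from the preceding Theorem, which exhibits $\rho^c(\B_n)$ as a quotient of the braid representation $\rho^W(\B_n)$ associated with some monomial twisted $\YD$-module $W$. The problem thus reduces to verifying that monomial unitary braid representations always have virtually abelian image, which is essentially the wreath-product observation already made in Section \ref{Preliminary Results}.

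Concretely, I would write $W=\bigoplus_{x\in X}W_x$ as the direct sum of one-dimensional monomial pieces permuted (up to scalars) by the twisted $G$-action. The tensor power $W^{\otimes n}$ then inherits a monomial decomposition indexed by the finite set $X^n$, and each braiding operator permutes these $|X|^n$ one-dimensional summands, acting within each by a scalar that, by unitarity, lies on the unit circle $\mathbb{T}$. Consequently $\rho^W(\B_n)$ is a subgroup of the monomial matrix group $M_{|X|^n}(\mathbb{T}) \cong \mathfrak{S}_{|X|^n}\rtimes \mathbb{T}^{|X|^n}$, which, as recalled in Section \ref{Preliminary Results}, contains the abelian subgroup $\mathbb{T}^{|X|^n}$ of finite index and is therefore virtually abelian.

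To finish, I invoke the fact that the class of virtually abelian groups is closed under both subgroups (intersect with the abelian finite-index subgroup) and quotients (project the abelian finite-index subgroup). Applying this successively gives that $\rho^W(\B_n)$ is virtually abelian as a subgroup of $M_{|X|^n}(\mathbb{T})$, and then that its quotient $\rho^c(\B_n)$ is virtually abelian as well. No genuine obstacle is expected: all the substantive work has been done in the monomialization results above, and this corollary is simply the packaging step.
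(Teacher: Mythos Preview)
Your proposal is correct and follows essentially the same route as the paper: invoke the preceding theorem to realize $\rho^c(\B_n)$ as a quotient of a monomial braid representation, then appeal to the wreath-product observation from Section~\ref{Preliminary Results} to conclude that monomial images are virtually abelian. You have simply made explicit the closure of virtually abelian groups under subgroups and quotients, which the paper leaves implicit.
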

\begin{proof}
A quasi-BVS of group-type is, by Definition \ref{qBVS of gt}, the same as a finite dimensional unitary twisted $\YD$-module over a group $G$ and the corresponding braid group images coincide.  Since the image of any monomial representation of the braid group is virtually abelian the result follows.
\end{proof}




\section{Discussion}

A classification of unitary BVSs in dimension higher than $2$ seems computationally well out-of-reach.  However, it is feasible that the monomial and Gaussian BVSs generate a large proportion of them (e.g. through quotients and subrepresentations).  We are not aware of any BVSs that do not come from these two families.

We think the following questions are worth pursuing:
\begin{enumerate}
 \item Does Conjecture \ref{mainconj}(a) imply (b)?  That is, if $(V,c)$ is a BVS such that $c$ has finite order and the braid group image is virtually abelian does it follow that the image is actually finite?
 \item Does there exist a locally monomial BVS $(V,c)$ such that the braid group image is not finite modulo the center?  By Lemma \ref{modZ} $c$ must have infinite order.  Moreover, there are monomial representations of $\B_n$ with infinite image modulo the center, the following gives an example:  $$\sigma_1\rightarrow\left[ \begin {array}{ccc} 0&x&0\\ \noalign{\medskip}x&0&0
\\ \noalign{\medskip}0&0&\frac{1}{x^2}\end {array} \right],\sigma_2\rightarrow\left[ \begin {array}{ccc} \frac{1}{x^2}&0&0\\ \noalign{\medskip}0&0&{x}^{
2}\\ \noalign{\medskip}0&1&0\end {array} \right]
.$$
\item What are the images of the Gaussian representations?  Goldschmidt and Jones \cite{GJ} computed the braid group images in $ES(p,n-1)$ for $p$ an odd prime; they are (essentially) symplectic groups over the field with $p$ elements. For $p=2$ the images are extensions of extra-special $2$ groups by symmetric groups (\cite{FRW}).  By Lemma \ref{primelemma}, to understand the general Gaussian representation images it is enough to compute them for prime powers, \emph{i.e.} in $ES(p^{k},n-1)$.

\item Turaev \cite{Tur} introduced \emph{enhanced Yang-Baxter operators} in order to produce link invariants from BVSs. What is the computational complexity of evaluating (approximately) the link invariants coming from locally monomial and Gaussian BVSs?  For Gaussian BVSs at primes the invariants are computed in \cite{GJ}.  For related work, see \cite{HNW}.

\end{enumerate}

\thispagestyle{empty}

\end{document}